\newtheorem{theorem}{Theorem}[section]
\newtheorem{lemma}[theorem]{Lemma}
\newtheorem{corollary}[theorem]{Corollary}
\theoremstyle{definition}
\newtheorem{definition}[theorem]{Definition}
\newcommand{\Z}{\mathbb{Z}}
\newcommand{\R}{\mathbb{R}}
\newcommand{\x}{\mbox{\boldmath $x$}}
\newcommand{\0}{\mbox{\boldmath $0$}}
\begin{document}

%%%%%%%%%%%%%%%%%%%%%%%%%
%%%%%%%%%%%%%%%%%%%%%%%%%

\title[11-colored knot diagram with five colors]
{11-colored knot diagram with five colors}
%%%%%%%%%%%%%%%%%%%%%%%%%

\author{Takuji NAKAMURA}
\address{Department of Engineering Science, 
Osaka Electro-Communication University,
Hatsu-cho 18-8, Neyagawa 572-8530, Japan}
\email{n-takuji@isc.osakac.ac.jp}

\author{Yasutaka NAKANISHI}
\address{Department of Mathematics, Kobe University, 
Rokkodai-cho 1-1, Nada-ku, Kobe 657-8501, Japan}
\email{nakanisi@math.kobe-u.ac.jp}

\author{Shin SATOH}
\address{Department of Mathematics, Kobe University, 
Rokkodai-cho 1-1, Nada-ku, Kobe 657-8501, Japan}
\email{shin@math.kobe-u.ac.jp}

\renewcommand{\thefootnote}{\fnsymbol{footnote}}
\footnote[0]{The third author is partially supported by 
JPSP KAKENHI Grant Number 25400090.}

%%%%%%%%%%%%%%%%%%%%%%%%%

\renewcommand{\thefootnote}{\fnsymbol{footnote}}
\footnote[0]{2010 {\it Mathematics Subject Classification}. 
Primary 57M25; Secondary 57Q45.}  

%57M25 Knots and links in $S^3$
%57Q45 Knots and links (in high dimensions) 

%%%%%%%%%%%%%%%%%%%%%%%%%

\keywords{knot, diagram, 11-coloring, virtual arc presentation, ribbon 2-knot.} 

%%%%%%%%%%%%%%%%%%%%%%%%%

\maketitle

%%%%%%%%%%%%%%%%%%%%%%%%%

\begin{abstract} 
We prove that any $11$-colorable knot is presented by an $11$-colored diagram 
where exactly five colors of eleven are assigned to the arcs. 
The number five is the minimum for all non-trivially $11$-colored diagrams 
of the knot. 
We also prove a similar result for any $11$-colorable ribbon $2$-knot. 
\end{abstract}

%%%%%%%%%%%%%%%%%%%%%%%%%
%%%%%%%%%%%%%%%%%%%%%%%%%
%%%%Section 1
%%%%%%%%%%%%%%%%%%%%%%%%%
%%%%%%%%%%%%%%%%%%%%%%%%%

\section{Introduction}\label{sec1}

The $n$-colorability introduced by Fox \cite{Fox} is 
one of the elementary notion in knot theory, 
and its properties have been studied in many papers. 
In $1999$, Harary and Kauffman \cite{HK} defined 
a kind of minimal invariant, ${\rm C}_n(K)$, of an $n$-colorable knot $K$. 
It is essential to consider the case that $n$ is an odd prime; 
in fact, for composite $n$, it is reduced to the cases of odd prime factors of $n$. 
In this case, we can define a modified version by restricting 
``effective" $n$-colorings (cf.~\cite{IM, NNS2}).

Let $p$ be an odd prime. 
A non-trivial $p$-coloring $C$ of a knot diagram $D$ 
is regarded as a non-constant map 
$$C:\mbox{\{arcs of $D$\}}\rightarrow{\Z}/p{\Z}=\{0,1,\dots,p-1\}$$ 
with a certain condition. 
For a $p$-colorable knot $K$, 
the number ${\rm C}_p(K)$ is defined to be the minimum number of 
$\#{\rm Im}(C)$ 
for all non-trivially $p$-colored diagrams $(D,C)$ of $K$. 
This number has been studied in some papers 
\cite{CJZ, GJKLZ, KL, KL2, LM, NNS, Osh, Sai, Sat2}. 
In particular, it is shown in \cite{NNS} that 
$${\rm C}_p(K)\geq \lfloor\log_2 p\rfloor+2$$ 
for any $p$-colorable knot $K$, 
and the equality holds for $p=3,5,7$ 
\cite{Osh, Sat2}.

For $p=11$, we have ${\rm C}_{11}(K)\geq 5$ 
by the above inequality or \cite[Theorem~2.4]{LM}. 
On the other hand, 
it is proved in \cite{CJZ} that 
${\rm C}_{11}(K)\leq 6$. 
If an $11$-colored diagram $(D,C)$ satisfies $\#{\rm Im}(C)=5$, 
then there are two possibilities 
$${\rm Im}(C)=\{1,4,6,7,8\}, \ \{0,4,6,7,8\}$$
up to isomorphisms induced by affine maps of ${\Z}/11{\Z}$. 
This split phenomenon is quite different from 
the cases $p=3,5,7$. 

%%%%%%%%%%%%%%
\begin{theorem}\label{thm11}
Any $11$-colorable knot $K$ satisfies the following. 
\begin{itemize}
\item[{\rm (i)}] 
There is an $11$-colored diagram $(D_1,C_1)$ of $K$ 
with ${\rm Im}(C_1)=\{1,4,6,7,8\}$. 
\item[{\rm (ii)}] 
There is an $11$-colored diagram $(D_2,C_2)$ of $K$ 
with ${\rm Im}(C_2)=\{0,4,6,7,8\}$. 
\end{itemize}
\end{theorem}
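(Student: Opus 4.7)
The plan is to start from an arbitrary $11$-colored diagram of $K$ and reduce it, by an explicit sequence of local modifications, to a diagram whose image is exactly the prescribed five-element set. First I would invoke the bound ${\rm C}_{11}(K) \le 6$ from \cite{CJZ} to obtain an $11$-colored diagram $(D, C)$ of $K$ with $\#{\rm Im}(C) \le 6$, and then apply an affine transformation of $\Z/11\Z$ to put the image in one of finitely many canonical six- or five-element normal forms.

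The key preparatory step is to tabulate the \emph{transition graph} of each target five-color set, whose vertices are the colors and whose edge labeled $b$ joins $a$ and $c$ whenever $2b \equiv a + c \pmod{11}$ with $a \neq c$. For $\{1,4,6,7,8\}$ this graph is a $5$-cycle in which every color plays a symmetric role; for $\{0,4,6,7,8\}$ it is a tree centered at $8$, and color $8$ occurs as an overcrossing only at trivial crossings. I would use these graphs to build a library of replacement tangles: for every admissible boundary coloring in the target set, a small tangle whose interior arcs take only target colors and which can be substituted into any diagram without changing the underlying knot type.

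With this library in hand, the reduction is essentially combinatorial: for each occurrence of the unwanted sixth color in the normalized diagram, I would replace a neighborhood of the offending arc by a matching tangle, so that after finitely many substitutions the image of the coloring lies in the target five-color set. Nontriviality of the resulting coloring follows from the lower bound ${\rm C}_{11}(K) \ge 5$, which rules out any intermediate collapse to fewer than five colors.

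The main obstacle will be the construction of the replacement library for part~(ii). In the cyclic transition graph of part~(i), every pair of allowed colors is connected inside the set, so local substitutions can be designed symmetrically. But in $\{0,4,6,7,8\}$, color $8$ cannot serve as overcrossing at a non-trivial crossing, and any tangle whose interior uses color $8$ must route that arc so that it passes only under other arcs at non-trivial crossings. Verifying that such tangles exist for every boundary configuration that arises from the six-color normal forms, and that the substitutions combine to eliminate the unwanted sixth color without reintroducing a different extra color, is the combinatorial heart of the argument.
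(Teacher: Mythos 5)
Your setup agrees with the paper's: start from the Cheng--Jin--Zhao result, normalize by an affine map of $\Z/11\Z$, and let the labeled graph on the color set (your ``transition graph'' is the paper's \emph{palette graph}) dictate which local moves are available. Your descriptions of the two target graphs are correct: $G(\{1,4,6,7,8\})$ is a $5$-cycle with a cyclic affine symmetry, and $G(\{0,4,6,7,8\})$ is a tree in which no edge is labeled $8$, so $8$ never appears as the over-color of a non-trivial crossing. One caveat already at this stage: the bound ${\rm C}_{11}(K)\le 6$ alone does not reduce to ``finitely many normal forms'' by affine maps, because the affine classification of palette graphs (the paper's Theorem~\ref{thm26}) is proved only for connected sets of the minimal size $\lfloor\log_2 p\rfloor+2=5$; for six colors you must cite \cite{CJZ} in the sharper form that ${\rm Im}(C)\subset\{0,1,4,6,7,8\}$ can be achieved, which is what the paper does (Theorem~\ref{thm212}).

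The genuine gap is that the ``replacement library'' is the whole proof, and the mechanism you propose for building it --- purely local tangle substitution with interior arcs colored only by the target set --- cannot work as stated. Inside a tangle every arc color is forced by the boundary colors through $a+b\equiv 2c$, so a local tangle can only produce colors generated by its boundary data; to kill a crossing of color $\{6|1|7\}$, say, one needs an arc colored $4$, which is not reachable from $\{1,6,7\}$ locally. The paper instead \emph{pulls} an arc of a prescribed color ($4$, $0$, or $7$) from elsewhere in the diagram across the offending site, and this global move is only legal after first destroying certain trivial crossings ($\{6|6|6\}$, $\{1|1|1\}$, $\{4|4|4\}$) so that the arcs of forbidden colors form disjoint intervals whose planar complement is connected. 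Most seriously for part~(ii): eliminating the color $1$ from $\{0,1,4,6,7,8\}$ unavoidably creates the new colors $3$ and $10$, so the paper's proof deliberately passes through the seven-element set $\{0,3,4,6,7,8,10\}$ and then removes $10$ and $3$ in two further stages (Sections~\ref{sec5} and~\ref{sec6}). The condition you flag as needing verification --- that substitutions never reintroduce an extra color --- is precisely what fails, and the enlarge-then-shrink detour is the missing idea. Your observation that the lower bound ${\rm C}_{11}(K)\ge 5$ forces the final image to be the full five-element set is sound, but it presupposes the reductions can be completed, which is the part left undone.
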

%%%%%%%%%%%%%%

We remark that 
these two sets are {\it common} $11$-minimal sufficient sets of colors 
but not {\it universal} ones in the sense of \cite{GJKLZ}. 
By Theorem~\ref{thm11}, we have the following immediately. 

%%%%%%%%%%%%%%
\begin{corollary}\label{cor12}
Any $11$-colorable knot $K$ satisfies 
${\rm C}_{11}(K)=5$. 
\hfill$\Box$
\end{corollary}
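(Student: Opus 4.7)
The plan is simply to combine the lower bound already recalled in the introduction with the upper bound supplied by Theorem~\ref{thm11}; Corollary~\ref{cor12} is an immediate bookkeeping consequence and requires no new geometric input.

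First I would invoke the inequality ${\rm C}_p(K)\geq \lfloor\log_2 p\rfloor+2$ of \cite{NNS}: specializing to $p=11$ gives ${\rm C}_{11}(K)\geq \lfloor\log_2 11\rfloor+2=3+2=5$, and this same lower bound is also available from \cite[Theorem~2.4]{LM}. Second, either part (i) or part (ii) of Theorem~\ref{thm11} produces an $11$-colored diagram $(D,C)$ of $K$ with $\#{\rm Im}(C)=5$, so by the very definition of ${\rm C}_{11}(K)$ as a minimum over all non-trivially $11$-colored diagrams we obtain ${\rm C}_{11}(K)\leq 5$. Sandwiching the two inequalities yields ${\rm C}_{11}(K)=5$, as required.

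Since both halves of the argument are either cited verbatim or read off directly from Theorem~\ref{thm11}, there is no real obstacle to the corollary itself. All of the actual work is concentrated in Theorem~\ref{thm11}, where one must explicitly produce, for an arbitrary $11$-colorable knot, diagrams realizing each of the two minimal palettes $\{1,4,6,7,8\}$ and $\{0,4,6,7,8\}$; once those diagrams exist, the equality ${\rm C}_{11}(K)=5$ is an automatic corollary.
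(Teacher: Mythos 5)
Your proposal is correct and matches the paper exactly: the paper derives Corollary~\ref{cor12} immediately from Theorem~\ref{thm11} (which gives $\mathrm{C}_{11}(K)\leq 5$) together with the lower bound $\mathrm{C}_{11}(K)\geq\lfloor\log_2 11\rfloor+2=5$ from \cite{NNS} or \cite[Theorem~2.4]{LM}. There is nothing to add.
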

%%%%%%%%%%%%%%

This paper is organized as follows. 
In Section~\ref{sec2}, we review the palette graph 
associated with a subset of ${\Z}/p{\Z}$ 
and its fundamental properties. 
In Section~\ref{sec3}, 
we prove Theorem~\ref{thm11}(i). 
The starting point of the proof is a modified version of 
the theorem in \cite{CJZ}: 
For any $11$-colorable knot $K$, 
there is an $11$-colored diagram $(D,C)$ of $K$ 
with ${\rm Im}(C)=\{0,1,4,6,7,8\}$. 
By applying Reidemeister moves to $(D,C)$ suitably, 
we remove the color $0$ from the diagram. 
Sections~\ref{sec4}--\ref{sec6} are devoted 
to proving Theorem~\ref{thm11}(ii). 
We first remove the color $1$ from $(D,C)$ as above 
by allowing the birth of new colors $3$ and $10$ in Section~\ref{sec4}, 
and then remove the colors $10$ and $3$ 
in Sections~\ref{sec5} and \ref{sec6}, respectively. 
In the last section, 
we prove a similar result for an $11$-colorable ribbon  $2$-knot.

\begin{figure}[hbt] 
\setlength{\unitlength}{1mm}
\begin{picture}(110,52)
\put(0,0){\makebox(30,8){$\#{\rm Im}(C)=5$}}
\put(0,20){\makebox(30,8){$\#{\rm Im}(C)=6$}}
\put(0,40){\makebox(30,8){$\#{\rm Im}(C)=7$}}
\put(39,0){\framebox(22,8){$\{1,4,6,7,8\}$}}
\put(79,0){\framebox(22,8){$\{0,4,6,7,8\}$}}
\put(37,20){\framebox(26,8){$\{0,1,4,6,7,8\}$}}
\put(38,21){\framebox(24,6){}}
\put(77,20){\framebox(26,8){$\{0,3,4,6,7,8\}$}}
\put(75,40){\framebox(30,8){$\{0,3,4,6,7,8,10\}$}}
\put(50,20){\vector(0,-1){12}}
\put(90,20){\vector(0,-1){12}}
\put(90,40){\vector(0,-1){12}}
\put(50,44){\vector(1,0){25}}
\put(50,44){\line(0,-1){16}}
\put(52,16){Remove $0$}
\put(52,11){(Section 3)}
\put(92,16){Remove $3$}
\put(92,11){(Section 6)}
\put(92,36){Remove $10$}
\put(92,31){(Section 5)}
\put(52,30){(Section 4)}
\put(52,35){with making $3$ and $10$}
\put(52,40){Remove $1$}
\end{picture}
\end{figure}

%%%%%%%%%%%%%%%%%%%%%%%%%
%%%%%%%%%%%%%%%%%%%%%%%%%
%%%%Section 2
%%%%%%%%%%%%%%%%%%%%%%%%%
%%%%%%%%%%%%%%%%%%%%%%%%%

\section{Preliminaries}\label{sec2} 

Throughout this section, 
$p$ denotes an odd prime. 

%%%%%%%%%%%%%%
\begin{definition}\label{def21}
Let $S$ be a subset of ${\Z}/p{\Z}$. 
The {\it palette graph} $G(S)$ of $S$ 
is a simple graph such that 
\begin{itemize}
\item[(i)] 
the vertex set of $G(S)$ is $S$, and 
\item[(ii)] 
two vertices $a$ and $b\in S$ are connected by an edge 
if and only if $\frac{a+b}{2}\in S$. 
\end{itemize}
By assigning $\frac{a+b}{2}$ to every edge joining $a$ and $b$, 
we regard $G(S)$ as a labeled graph. 
Such an edge is denoted by $\{a|\frac{a+b}{2}|b\}$. 
\end{definition} 
%%%%%%%%%%%%%%

%%%%%%%%%%%%%%
\begin{definition}\label{def22}
For two subsets $S$ and $S'\subset{\Z}/p{\Z}$, 
the palette graphs $G(S)$ and $G(S')$  are said to be 
{\it isomorphic} if there is a bijection 
$f:S\rightarrow S'$ such that 
$\frac{a+b}{2}\in S$ if and only if $\frac{f(a)+f(b)}{2}\in S'$. 
We denote it by $G(S)\cong G(S')$. 
\end{definition} 
%%%%%%%%%%%%%%

%%%%%%%%%%%%%%
\begin{lemma}\label{lem23}
If $S\subset S'\subset{\Z}/p{\Z}$, then $G(S)$ is a subgraph of $G(S')$, 
which is obtained from $G(S')$ 
by deleting the vertices in $S'\setminus S$ and 
the edges whose labels belong to $S'\setminus S$. 
\end{lemma}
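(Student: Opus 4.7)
The plan is to unpack the two definitions and check that the vertex sets and the (labeled) edge sets of $G(S)$ and of the graph obtained from $G(S')$ by the prescribed deletions coincide.

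First I would verify the vertex sets. By Definition~\ref{def21}(i), the vertex set of $G(S)$ is $S$, while the vertex set of $G(S')$ is $S'$. Deleting the vertices in $S'\setminus S$ from $G(S')$ leaves exactly $S'\setminus(S'\setminus S)=S$ (using $S\subset S'$), matching $G(S)$.

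Next I would check the edges, which is the only substantive step. Let $a,b$ be two distinct elements of $S$. By Definition~\ref{def21}(ii), the edge $\{a\,|\,\tfrac{a+b}{2}\,|\,b\}$ lies in $G(S)$ if and only if $\tfrac{a+b}{2}\in S$. On the other side, the same edge survives the two deletions applied to $G(S')$ exactly when (a) neither endpoint is removed, i.e.\ $a,b\in S$, and (b) its label $\tfrac{a+b}{2}$ is not in $S'\setminus S$. Since $a,b\in S$ is already assumed and $\tfrac{a+b}{2}\in S'$ (because the edge exists in $G(S')$), condition (b) is equivalent to $\tfrac{a+b}{2}\in S$. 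Thus the surviving edge sets agree both as sets of edges and as labeled edges.

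The only thing requiring care is that the edge-deletion step is not redundant with the vertex-deletion step: an edge whose label lies in $S'\setminus S$ may still have both endpoints in $S$, so it is not removed merely by deleting the vertices of $S'\setminus S$. I would flag this explicitly to justify why both deletion operations are needed. Beyond that bookkeeping, the proof is a direct comparison of the defining conditions, so I do not anticipate any real obstacle.
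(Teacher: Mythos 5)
Your proof is correct and is just a careful unpacking of Definition~\ref{def21}, which is exactly what the paper's proof amounts to (the paper simply states that the lemma follows immediately from the definition). Your extra remark that the edge-deletion step is not subsumed by the vertex-deletion step is a fair observation but does not change the substance.
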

%%%%%%%%%%%%%%

\begin{proof}
This follows from definition immediately. 
\end{proof}

%%%%%%%%%%%%%%
\begin{theorem}[\cite{NNS}]\label{thm24}
If the palette graph $G(S)$ is connected with $\#S>1$, 
then we have 
$\#S\geq\lfloor\log_2 p\rfloor+2$. 
\hfill$\Box$
\end{theorem}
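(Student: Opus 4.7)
The plan is to build a spanning tree of $G(S)$, express every vertex as a controlled $\Z$-linear combination of tree labels, and extract from the resulting integer identities the bound $p < 2^{k-1}$, where $k = \#S$. Since $p$ is an odd prime and hence not a power of $2$, this inequality is equivalent to the desired $k \geq \lfloor \log_2 p \rfloor + 2$.

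First, the palette graph is invariant under the affine maps $x \mapsto \alpha x + \beta$ of $\Z/p\Z$ (translation preserves midpoints, and scaling commutes with the midpoint operation), so after translating I may assume $0 \in S$. Since $G(S)$ is connected, I fix a spanning tree $T$ of $G(S)$ rooted at $0$, which has exactly $k - 1$ edges.

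Next, for any vertex $a \in S$ at depth $r$ in $T$, with edge labels $m_1, m_2, \ldots, m_r \in S$ along the tree path from $0$ to $a$, I show by induction on $r$, using the defining relation $u + v \equiv 2m \pmod{p}$ for each tree edge $\{u|m|v\}$, that
\[
a \;\equiv\; 2\sum_{i=1}^{r} (-1)^{r-i}\, m_i \pmod{p}.
\]
Lifting every element of $S$ to its representative in $\{0, 1, \ldots, p - 1\}$, each spanning-tree edge yields an integer equation $\hat{u} + \hat{v} - 2 \hat{m} = \epsilon_e \cdot p$ with $\epsilon_e \in \{-1, 0, 1\}$, since the left-hand side lies in $[-(2p-2), 2p-2]$. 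These $k - 1$ integer relations, together with $\hat{0} = 0$, pin down the $k$ integer lifts of $S$ up to one free parameter modulo $p$.

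The final step is to eliminate the tree labels in the right order — processing leaves of $T$ first — in order to produce a single non-zero integer which is divisible by $p$ and has absolute value at most $2^{k-1} - 1$; this forces $p \leq 2^{k-1} - 1$. The main obstacle is precisely this last step: extracting the sharp exponential bound $2^{k-1}$ rather than a weaker Hadamard-type bound like $(\sqrt{6})^{k-1}$. The key observation should be that each leaf-elimination at most doubles the remaining integer coefficients — mirroring the factor of $2$ in each midpoint relation — so the cumulative blow-up after $k - 1$ eliminations is controlled by $2^{k-1}$. Formalizing this inductive doubling, and confirming that the final integer is genuinely non-zero (so that its divisibility by $p$ is actually informative), will be the heart of the argument.
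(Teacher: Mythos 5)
Your plan is, in substance, the argument of \cite{NNS} that this paper simply cites: the paper itself gives no proof of Theorem~\ref{thm24} beyond the reference, but the proof of Lemma~\ref{lem25} records exactly the three facts your outline needs about the spanning-tree relation matrix $A$ (with entries $1,1,-2$ per row): its reduced minor $\det(A')$ is odd, satisfies $|\det(A')|<2^{k-1}$, and is divisible by $p$, whence $p\leq|\det(A')|<2^{k-1}$ and $k\geq\lfloor\log_2p\rfloor+2$. So you have chosen the right route. The problem is that your write-up defers precisely the two claims that constitute the theorem. First, the non-vanishing of the eliminated integer is not addressed; the clean way to get it is to reduce the relation matrix modulo $2$, where every $-2$ entry dies and what remains is the reduced incidence matrix of the spanning tree, whose determinant is $\pm1$; hence the integer is odd, in particular non-zero. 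You should also be explicit that the divisibility by $p$ comes from the existence of a mod-$p$ solution with pairwise distinct entries (the actual elements of $S$, using $\#S>1$), which together with the all-ones solution forces the corank to be $2$ and the $(k-1)\times(k-1)$ minor to vanish mod $p$; your phrase ``pin down \dots{} up to one free parameter'' presupposes this rather than proves it.

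Second, the bound $|\det(A')|<2^{k-1}$ is the delicate point, and your heuristic ``each leaf-elimination at most doubles the remaining coefficients'' does not work as stated: a leaf vertex $v$ of the spanning tree is incident to only one tree \emph{edge}, but it may still occur as the midpoint \emph{label} of several other tree edges, so its column contains entries $-2$ outside the pendant row. Eliminating $v$ via $\hat v=2\hat m-\hat u+\epsilon p$ then substitutes into terms of the form $-2\hat v$, multiplying some coefficients by $4$ rather than $2$, and the naive product bound degenerates. (This is also why a Hadamard estimate only gives $6^{(k-1)/2}>2^{k-1}$.) The correct induction has to be organized around the tree so that the total growth over all $k-1$ eliminations is still $2^{k-1}$; this bookkeeping is the actual content of the lemma in \cite{NNS}, and without it your proposal is an outline rather than a proof.
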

%%%%%%%%%%%%%%

%%%%%%%%%%%%%%
\begin{lemma}\label{lem25}
Let $S$ be a subset of ${\Z}/p{\Z}$ 
such that $G(S)$ is connected with  
$\#S=\lfloor\log_2 p\rfloor+2$. 
Put 
$U=\{S'\subset{\Z}/p{\Z}|G(S')\cong G(S)\}$. 
Then we have 
$\#U=p(p-1)$. 
\end{lemma}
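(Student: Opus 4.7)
The strategy is to realize $U$ as the orbit of $S$ under the affine group $\mathrm{Aff}(\mathbb{Z}/p\mathbb{Z}) = \{x \mapsto \alpha x + \beta : \alpha \in (\mathbb{Z}/p\mathbb{Z})^{\times},\ \beta \in \mathbb{Z}/p\mathbb{Z}\}$ of order $p(p-1)$. Every affine map commutes with the midpoint operation $(a,b) \mapsto (a+b)/2$, so $f(S) \in U$ for all $f \in \mathrm{Aff}(\mathbb{Z}/p\mathbb{Z})$; hence the orbit $\mathcal{O}(S)$ lies in $U$. The plan is to verify (a) $\mathcal{O}(S) = U$ and (b) the stabilizer of $S$ in $\mathrm{Aff}(\mathbb{Z}/p\mathbb{Z})$ is trivial, which together give $\# U = p(p-1)$.

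For (a), given $S' \in U$ and an isomorphism $\phi : G(S) \to G(S')$, I would pick an edge $\{v_{0}, v_{1}\}$ of $G(S)$ (which exists since $G(S)$ is connected with $\#S \geq 2$) and its label $v_{2} = (v_{0} + v_{1})/2 \in S$. Let $f$ be the unique affine map with $f(v_{0}) = \phi(v_{0})$ and $f(v_{1}) = \phi(v_{1})$; then $f(v_{2}) = \phi(v_{2})$ automatically. Put $T = \{v \in S : f(v) = \phi(v)\}$, so that $\{v_{0}, v_{1}, v_{2}\} \subseteq T$. Both $f$ and $\phi$ respect midpoints, so $T$ obeys the triangle rule: if $\{a, b\}$ is an edge of $G(S)$ with label $c$ and two of $\{a, b, c\}$ lie in $T$, then so does the third. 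Propagating from $\{v_{0}, v_{1}, v_{2}\}$ along the edges of $G(S)$ through the inductive triangle-by-triangle construction underlying the proof of Theorem~\ref{thm24}, I would conclude $T = S$, so $f(S) = S' \in \mathcal{O}(S)$.

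For (b), suppose $(\alpha, \beta) \neq (1, 0)$ satisfies $\alpha S + \beta = S$. If $\alpha = 1$ and $\beta \neq 0$, then $S$ is invariant under a non-trivial translation of the prime cyclic group $\mathbb{Z}/p\mathbb{Z}$, hence equals $\mathbb{Z}/p\mathbb{Z}$, contradicting $\#S < p$. If $\alpha \neq 1$, the affine map has a unique fixed point $x_{0}$ and partitions $\mathbb{Z}/p\mathbb{Z} \setminus \{x_{0}\}$ into orbits of size $d = \mathrm{ord}(\alpha) \mid p - 1$, so $S$ being a union of such orbits (possibly together with $\{x_{0}\}$) forces $d \mid \#S$ or $d \mid \#S - 1$. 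The short list of surviving $d$ should be eliminated by observing that each would induce a non-trivial automorphism of the labeled graph $G(S)$ that is incompatible with the rigid build-up of a minimum connected palette graph.

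The principal obstacle is the structural input from \cite{NNS}: that a minimum connected palette graph is generated one triangle at a time starting from a single edge together with its label, and that this generation is rigid in two senses, namely (i) any two adjacent vertices with their common midpoint determine the rest of $S$ under the closure rule, and (ii) no non-trivial affine symmetry is compatible with the construction. Both the propagation needed for (a) and the case-by-case exclusion of non-trivial stabilizers in (b) reduce to invoking this rigidity.
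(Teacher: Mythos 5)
Your overall route is genuinely different from the paper's: the paper fixes a spanning tree of $G(S)$, forms the $(k-1)\times k$ relation matrix $A$, shows $|\det(A')|=p$ so that the solution space of the homogeneous system over $\Z/p\Z$ is two-dimensional, and counts the $p(p-1)$ solution vectors with pairwise distinct entries; you instead try to realize $U$ as a free orbit of the affine group. The fatal problem is your step (b): the stabilizer claim is not merely unproven, it is false for one of the two sets this paper is actually about. Take $p=11$ and $S=\{1,4,6,7,8\}$, which satisfies the hypotheses of Lemma~\ref{lem25} ($G(S)$ is the $5$-cycle $1\,$--$\,4\,$--$\,8\,$--$\,6\,$--$\,7\,$--$\,1$ of Figure~\ref{fig2-01}, and $\#S=5=\lfloor\log_2 11\rfloor+2$). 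Since $S=3+\{1,3,4,5,9\}$ is a translate of the quadratic residues, the affine map $f(x)=3x+5$ satisfies $f(S)=S$ with $f\neq\mathrm{id}$ (it permutes $1\mapsto 8\mapsto 7\mapsto 4\mapsto 6\mapsto 1$). So the stabilizer has order $5$ --- exactly the case $d\mid\#S$ on your ``short list,'' which you dismiss by appealing to rigidity; but the labelled $5$-cycle does carry a rotation group of order $5$, realized affinely. Hence the orbit $\mathcal{O}(S)$ has only $110/5=22$ elements, and orbit--stabilizer cannot output $p(p-1)=110$. (This example is worth dwelling on even beyond your proof: the paper's $p(p-1)$ counts ordered solution vectors, i.e.\ label-preserving isomorphisms out of $G(S)$, and its identification of those vectors with the \emph{sets} in $U$ is $5$-to-$1$ here, so your computation, done honestly, puts real pressure on the stated count as well.)

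Step (a) has an independent gap. An isomorphism $\phi$ in the sense of Definition~\ref{def22} is only required to preserve the edge relation, not the labels. From $f(v_0)=\phi(v_0)$ and $f(v_1)=\phi(v_1)$ you may conclude that $f(v_2)=\frac{\phi(v_0)+\phi(v_1)}{2}$ lies in $S'$ (it is the label of the edge $\{\phi(v_0),\phi(v_1)\}$ in $G(S')$), but not that it equals $\phi(v_2)$; so the set $T$ need not contain $v_2$ at the start, and the triangle rule does not propagate along $\phi$ as written. To run your argument you would first need to replace $\phi$ by a label-preserving isomorphism, and the ``one triangle at a time'' rigidity you attribute to \cite{NNS} is not what Theorem~\ref{thm24} supplies --- the input the paper actually quotes from \cite{NNS} is the determinant estimate for $A'$. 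The salvageable statement along your lines is that the $p(p-1)$ tuples $\lambda\cdot{}^{\rm t}(v_1,\dots,v_k)+\mu\cdot{}^{\rm t}(1,\dots,1)$ with $\lambda\not\equiv 0$ biject with the label-preserving isomorphisms out of $G(S)$; passing from these to the underlying subsets divides by the (possibly non-trivial) affine stabilizer, which your proposal --- and the counting you are asked to reproduce --- cannot ignore.
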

%%%%%%%%%%%%%%

\begin{proof}
Let $T$ be a maximal tree of $G(S)$. 
Let $v_1,v_2,\dots,v_k$ be the vertices of $T$, 
and $e_1,e_2,\dots,e_{k-1}$ the edges of $T$, 
where $k=\#S=\lfloor\log_2 p\rfloor+2$. 
Let $A=(a_{ij})$ be the $(k-1)\times k$ matrix 
with ${\Z}$-entries defined by  
$$a_{ij}=\left\{
\begin{array}{rl} 
1 & \mbox{($e_i$ is incident to $v_j$)}, \\
-2 & \mbox{(the label of $e_i$ is $v_j$)}, \\
0 & \mbox{(otherwise)}. 
\end{array}\right.$$
Let $A'$ be the $(k-1)\times(k-1)$ matrix 
obtained from $A$ by deleting the $k$th column. 
It is known in \cite{NNS} that 
\begin{itemize}
\item[(i)] 
${\rm det}(A')$ is odd, 
\item[(ii)] 
$|{\rm det}(A')|<2^{k-1}$, and 
\item[(iii)] 
${\rm det}(A')$ is divisible by $p$. 
\end{itemize}
Since 
$2^{k-2}<p\leq|{\rm det}(A')|<2^{k-1}$, 
we have $|{\rm det}(A')|=p$. 
This implies that the corank of $A$ with ${\Z}/p{\Z}$-entries is exactly equal to $2$.

Let $V=\{{\x}|A{\x}\equiv{\0}\mbox{ (mod $p$)}\}$ denote the solution space. 
By the above argument, we have 
$$V=\{\lambda\cdot{}^{\rm t}(v_1,v_2,\dots,v_k)+\mu\cdot {}^{\rm t}(1,1,\dots,1)|
\lambda,\mu\in{\Z}/p{\Z}\}.$$
Since the elements of $U$ are identified with 
the vectors of $V$ whose entries are all distinct. 
Such a vector is obtained by the condition $\lambda\not\equiv 0$ (mod~$p$). 
Therefore, we have $\#U= p(p-1)$. 
\end{proof}

%%%%%%%%%%%%%%
\begin{theorem}\label{thm26}
Let $S$ and $S'$ be subsets of ${\Z}/p{\Z}$. 
Suppose that $G(S)$ and $G(S')$ are connected 
with $\#S=\#S'=\lfloor\log_2p\rfloor+2$. 
Then the following are equivalent. 
\begin{itemize}
\item[{\rm (i)}] 
The palette graphs $G(S)$ and $G(S')$ are isomorphic. 
\item[{\rm (ii)}] 
There exist 
$\alpha\not\equiv 0$ and $\beta\in{\Z}/p{\Z}$ 
such that the affine map $f(x)=\alpha x+\beta$ 
satisfies $f(S)=S'$. 
\end{itemize}
\end{theorem}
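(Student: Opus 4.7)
The direction (ii) $\Rightarrow$ (i) is immediate: an affine map $f(x) = \alpha x + \beta$ with $\alpha \not\equiv 0$ is a bijection of ${\Z}/p{\Z}$ satisfying $f\!\left(\frac{a+b}{2}\right) = \frac{f(a)+f(b)}{2}$, so its restriction $S \to S'$ preserves both the adjacency and labeling of the palette graph, which yields the isomorphism $G(S) \cong G(S')$.

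For (i) $\Rightarrow$ (ii), the plan is to recover the affine map directly from a given graph isomorphism, using the explicit description of the solution space $V$ from the proof of Lemma~\ref{lem25}. Let $\varphi : S \to S'$ be the bijection realizing $G(S) \cong G(S')$, and fix a maximal tree $T$ of $G(S)$ with vertices $v_1, \ldots, v_k$ and edges $e_1, \ldots, e_{k-1}$ as in Lemma~\ref{lem25}. For each tree edge $e_i = \{v_{j_1}|v_\ell|v_{j_2}\}$, its image under $\varphi$ is an edge of $G(S')$ joining $\varphi(v_{j_1})$ and $\varphi(v_{j_2})$ with label $\varphi(v_\ell)$, whence $\varphi(v_{j_1}) + \varphi(v_{j_2}) \equiv 2\varphi(v_\ell) \pmod p$. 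These are precisely the equations $A\x \equiv \0 \pmod p$ of Lemma~\ref{lem25} applied to $\x = {}^{\rm t}(\varphi(v_1), \ldots, \varphi(v_k))$, so $\x \in V$.

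By the explicit parametrization $V = \{\lambda\,{}^{\rm t}(v_1, \ldots, v_k) + \mu\,{}^{\rm t}(1, \ldots, 1) : \lambda, \mu \in {\Z}/p{\Z}\}$ established in Lemma~\ref{lem25}, there exist $\lambda, \mu \in {\Z}/p{\Z}$ with $\varphi(v_j) = \lambda v_j + \mu$ for every $j$. The injectivity of $\varphi$ together with $k = \#S \geq 3$ forces $\lambda \not\equiv 0$, so $f(x) = \lambda x + \mu$ is an invertible affine map satisfying $f(S) = \varphi(S) = S'$. The one point requiring care is observing that the tree equations alone suffice to confine $\x$ to the two-dimensional space $V$, so no information about non-tree edges of $G(S)$ is used; this is what keeps the argument essentially linear-algebraic and avoids any case analysis on the cycle structure of $G(S)$.
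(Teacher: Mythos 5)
Your direction (ii)$\Rightarrow$(i) matches the paper's argument. For (i)$\Rightarrow$(ii), however, there is a genuine gap at the step where you pass from the isomorphism $\varphi$ to the linear system $A\x\equiv\0$: you assert that the image of a tree edge $\{v_{j_1}|v_\ell|v_{j_2}\}$ is an edge of $G(S')$ \emph{with label} $\varphi(v_\ell)$, i.e.\ that $\varphi(v_{j_1})+\varphi(v_{j_2})\equiv 2\varphi(v_\ell)$. Definition~\ref{def22} only requires $\varphi$ to preserve adjacency ($\frac{a+b}{2}\in S$ if and only if $\frac{\varphi(a)+\varphi(b)}{2}\in S'$); it does not require $\varphi\bigl(\frac{a+b}{2}\bigr)=\frac{\varphi(a)+\varphi(b)}{2}$, and these conditions genuinely differ. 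For $p=11$ and $S=S'=\{1,4,6,7,8\}$, the palette graph is the $5$-cycle on the vertices $1,4,8,6,7$ in this cyclic order, and the reflection $\varphi$ fixing $1$ and swapping $4\leftrightarrow 7$ and $6\leftrightarrow 8$ is an isomorphism in the sense of Definition~\ref{def22}. Yet the edge $\{1|8|4\}$ is sent to the edge $\{1|4|7\}$, whose label $4$ differs from $\varphi(8)=6$; the vector ${}^{\rm t}(\varphi(1),\varphi(4),\varphi(6),\varphi(7),\varphi(8))={}^{\rm t}(1,7,8,4,6)$ does not lie in $V$; and $\varphi$ is not the restriction of any affine map. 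So the strategy of reading the affine map off the given $\varphi$ cannot work in general: the theorem only asserts that \emph{some} affine map carries $S$ onto $S'$ (in this example the identity does), not that the given $\varphi$ is itself affine.

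This is precisely why the paper argues by counting instead of by direct construction: using (ii)$\Rightarrow$(i), the family of affine images $\{f(S)\}$ is contained in $U=\{S'\mid G(S')\cong G(S)\}$, and Lemma~\ref{lem25} is invoked to conclude that the two families have the same cardinality and hence coincide, so that every $S'$ with $G(S')\cong G(S)$ is an affine image of $S$ without any claim about the particular isomorphism realizing $G(S')\cong G(S)$. Your argument would be correct, and arguably cleaner than the counting, if ``isomorphic'' meant a \emph{label-preserving} bijection of the labeled graphs; as Definition~\ref{def22} stands, you must either first replace the given adjacency-preserving $\varphi$ by a label-preserving one, or abandon $\varphi$ altogether as the paper does.
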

%%%%%%%%%%%%%%

\begin{proof} 
\underline{(ii)$\Rightarrow$(i).} 
Since $\alpha\not\equiv 0$ (mod~$p$), 
$f:S\rightarrow S'$ is a bijection. 
Furthermore, 
$\frac{a+b}{2}\in S$ holds if and only if $f(\frac{a+b}{2})=\frac{f(a)+f(b)}{2}\in f(S)=S'$ holds. 

\underline{(i)$\Rightarrow$(ii).} 
By the above argument, 
we have 
$$U\supset\{f(S)|f(x)=\alpha x+\beta, \alpha\not\equiv 0, \beta\in{\Z}/p{\Z}\},$$
where $U$ is the set in Lemma~\ref{lem25}. 
Since these two sets have the same number of elements by Lemma~\ref{lem25}, 
they are the same set. 
\end{proof}

Let $D$ be a diagram of a knot $K$. 
We regard $D$ as a disjoint union of arcs 
whose endpoints are under-crossings. 
Fox \cite{Fox} introduced the notion of $p$-colorings: 
A map $C:\{\mbox{arcs of $D$}\}\rightarrow{\Z}/p{\Z}$ 
is a {\it $p$-coloring} if $a+b\equiv 2c$ (mod~$p$) 
holds at every crossing, 
where $a$ and $b$ are the elements assigned to the under-arcs by $C$, 
and $c$ is the one to the over-arc. 
The triple $\{a|c|b\}$ 
is called the {\it color} of the crossing. 
The assigned element of an arc of $D$ is called the {\it color} of the arc. 
If the color of an arc is $a$, 
then the arc is called an {\it $a$-arc}. 

In a $p$-colored diagram $(D,C)$, 
the crossing of color $\{a|a|a\}$ is called {\it trivial}, 
and otherwise {\it non-trivial}. 
If $C$ is a constant map, 
it is called a {\it trivial $p$-coloring}, 
and otherwise, {\it non-trivial}. 
In other words, 
a $p$-coloring $C$ is non-trivial 
if and only if $\#{\rm Im}(C)>1$. 
If a knot $K$ admits a non-trivially $p$-colored diagram $(D,C)$, 
$K$ is called {\it $p$-colorable}. 

For a $p$-colorable knot $K$, 
we denote by ${\rm C}_p(K)$ 
the minimum number of $\#{\rm Im}(C)$ 
for all non-trivially $p$-colored diagram $(D,C)$ of $K$ \cite{HK}. 
For the study of this number, 
it is helpful to use the palette graph $G({\rm Im}(C))$ 
of the image ${\rm Im}(C)\subset{\Z}/p{\Z}$ 
in the following sense. 

%%%%%%%%%%%%%%
\begin{lemma}\label{lem27}
If $\{a|c|b\}$ is a non-trivial color of a crossing of a $p$-colored diagram $(D,C)$, 
then the palette graph $G({\rm Im}(C))$ has an edge $\{a|c|b\}$. 
\end{lemma}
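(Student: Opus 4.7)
The plan is to unpack the definitions and verify that all three data $(a,c,b)$ live in the vertex set $S := \mathrm{Im}(C)$, that $c$ is indeed the average of $a$ and $b$ modulo $p$, and that $a\neq b$ so that a genuine edge of the simple graph $G(S)$ is produced.

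First I would observe that since $a$, $b$, $c$ are colors assigned by $C$ to certain arcs of $D$, each of them belongs to $\mathrm{Im}(C)=S$, so all three are vertices of $G(S)$. Next, the $p$-coloring condition at the crossing says $a+b\equiv 2c\pmod p$. Because $p$ is an odd prime, $2$ is invertible in $\mathbb{Z}/p\mathbb{Z}$, and we may divide to get $c\equiv (a+b)/2\pmod p$. Thus the label $c$ is exactly the quantity $\tfrac{a+b}{2}$ appearing in Definition \ref{def21}(ii).

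The only remaining point, and the step that really uses the hypothesis, is showing $a\neq b$, which is needed because $G(S)$ is a simple graph (so in particular has no loops). Suppose for contradiction $a=b$. Then $2a\equiv 2c\pmod p$, and invertibility of $2$ again yields $a\equiv c\pmod p$, so the color of the crossing is $\{a|a|a\}$, contradicting non-triviality. Therefore $a\neq b$, and consequently $\{a\,|\,\tfrac{a+b}{2}\,|\,b\}=\{a|c|b\}$ is an edge of $G(S)$ in the sense of Definition \ref{def21}.

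There is no serious obstacle here; the statement is essentially a translation between the crossing condition and the palette-graph definition. The one subtle place is that the non-triviality hypothesis is doing real work: without it one could only conclude that $a$ and $b$ \emph{can} be joined by an edge, not that the two endpoints are distinct. I would therefore make a point in the writeup of stating explicitly why oddness of $p$ is invoked (to invert $2$) and why non-triviality is invoked (to rule out $a=b$).
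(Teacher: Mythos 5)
Your proof is correct and is essentially the paper's argument: the paper's one-line proof ("since $a+b\equiv 2c$ (mod $p$), the lemma follows by definition") is exactly your unpacking, and your extra care in checking $a\neq b$ via non-triviality (and invertibility of $2$ for odd $p$) just makes explicit what the paper leaves implicit.
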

%%%%%%%%%%%%%%

\begin{proof}
Since $a+b\equiv 2c$ (mod~$p$) holds, 
the lemma follows by definition. 
\end{proof}

%%%%%%%%%%%%%%
\begin{lemma}\label{lem28}
The palette graph $G({\rm Im}(C))$ of 
a $p$-colored diagram $(D,C)$ of a knot is connected. 
\end{lemma}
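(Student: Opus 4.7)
\medskip

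The plan is to argue by contradiction. Suppose $G(\mathrm{Im}(C))$ is disconnected. Then one can partition the vertex set as $\mathrm{Im}(C)=S_1\sqcup S_2$ with both $S_i$ non-empty and with no edge of $G(\mathrm{Im}(C))$ running between $S_1$ and $S_2$; equivalently, for every $a\in S_1$ and $b\in S_2$ one has $\tfrac{a+b}{2}\notin\mathrm{Im}(C)$.

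Next I would use this to partition the arcs of $D$: call an arc an \emph{$S_i$-arc} if its color lies in $S_i$. Since $C$ is surjective onto $\mathrm{Im}(C)$, both families of arcs are non-empty. The key observation is that as one travels along the underlying knot (a single connected closed curve), one passes from one arc to the next precisely at an under-crossing: the two under-arcs meeting at such a crossing are consecutive in the traversal, and their colors $a$, $b$ together with the over-arc color $c$ satisfy $a+b\equiv 2c\pmod p$.

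Since both $S_1$-arcs and $S_2$-arcs exist and the diagram is traversed cyclically, at some under-crossing the two meeting arcs have colors $a\in S_1$ and $b\in S_2$. If $a=b$ the two arcs would lie in the same $S_i$, so $a\neq b$, and the crossing is non-trivial. By Lemma~\ref{lem27}, this produces an edge $\{a|c|b\}$ in $G(\mathrm{Im}(C))$ joining a vertex of $S_1$ to a vertex of $S_2$, contradicting our choice of the partition.

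I do not expect a serious obstacle here; the only delicate point is being explicit about the correspondence between consecutive arcs along the knot and the under-arcs at a crossing, so that the Fox relation $a+b\equiv 2c\pmod p$ applies to a transition arc-pair. Once that is spelled out, the proof is essentially a one-line application of Lemma~\ref{lem27}.
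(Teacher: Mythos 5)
Your proof is correct and rests on exactly the same idea as the paper's: since the knot diagram is a single closed curve, consecutive arcs meet at under-crossings, and each non-trivial under-crossing supplies an edge of the palette graph via Lemma~\ref{lem27}. The paper phrases this directly (walking from an $a$-arc to a $b$-arc and reading off a path of edges) rather than by contradiction with a partition, but the content is identical.
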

%%%%%%%%%%%%%%

\begin{proof}
Let $a$ and $b$ be vertices of $G({\rm Im}(C))$. 
By definition, we have an $a$-arc and a $b$-arc of $D$. 
Since $D$ is a diagram of a knot (not a link), 
we can walk along $D$ from the $a$-arc to the $b$-arc. 
Let $\{a_i|c_i|a_{i+1}\}$ $(1\leq i\leq k-1)$ be 
the colors of non-trivial under-crossings on the path 
such that $a_1=a$ and $a_k=b$. 
Then the vertices $a$ and $b$ in the palette graph are 
connected by a sequence of edges 
$\{a_i|c_i|a_{i+1}\}$ $(1\leq i\leq k-1)$. 
\end{proof}

%%%%%%%%%%%%%%
\begin{theorem}[\cite{NNS}]\label{thm29}
Any non-trivial $p$-colored diagram $(D,C)$ of a knot 
satisfies $\#{\rm Im}(C)\geq\lfloor\log_2p\rfloor+2$. 
Therefore, we have 
${\rm C}_p(K)\geq\lfloor\log_2p\rfloor+2$ 
for any $p$-colorable knot $K$. 
\end{theorem}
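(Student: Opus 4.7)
The plan is to combine the connectedness statement of Lemma~\ref{lem28} with the numerical bound of Theorem~\ref{thm24}; the result should follow almost immediately, since everything needed has been set up in the preliminary lemmas.

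First I would unpack the hypothesis. Set $S={\rm Im}(C)\subset{\Z}/p{\Z}$. The assumption that $(D,C)$ is non-trivial means that $C$ is not a constant map, hence $\#S>1$. This is exactly the cardinality hypothesis required by Theorem~\ref{thm24}.

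Next I would verify that $G(S)$ is connected. This is precisely the content of Lemma~\ref{lem28}, which uses the fact that $D$ is the diagram of a \emph{knot} (so we can walk along $D$ between any two arcs of prescribed colors) and that each non-trivial under-crossing encountered along the way records an edge of $G(S)$ by Lemma~\ref{lem27}. With $\#S>1$ and $G(S)$ connected, Theorem~\ref{thm24} applies and yields
\[
\#{\rm Im}(C)=\#S\geq\lfloor\log_2 p\rfloor+2,
\]
establishing the first assertion.

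Finally, the statement about ${\rm C}_p(K)$ is an immediate consequence: by definition ${\rm C}_p(K)$ is the minimum of $\#{\rm Im}(C)$ taken over all non-trivially $p$-colored diagrams of $K$, and the inequality just proven holds for each such diagram, so the minimum is also bounded below by $\lfloor\log_2 p\rfloor+2$. There is no real obstacle here; the whole proof is a two-line concatenation of Lemma~\ref{lem28} and Theorem~\ref{thm24}, and the only thing to be careful about is explicitly noting that ``non-trivial'' translates to the cardinality condition $\#S>1$ needed to invoke Theorem~\ref{thm24}.
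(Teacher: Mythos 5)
Your proposal is correct and matches the paper's own argument, which simply cites Theorem~\ref{thm24} together with Lemma~\ref{lem28}; your extra remark that non-triviality supplies the hypothesis $\#S>1$ is a sound (if implicit in the paper) detail. Nothing further is needed.
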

%%%%%%%%%%%%%%

\begin{proof}
This follows from Theorem~\ref{thm24} and Lemma~\ref{lem28}. 
\end{proof}

%%%%%%%%%%%%%%
\begin{lemma}\label{lem210}
Let $(D,C)$ be a non-trivially $p$-colored diagram of a knot $K$, 
and $f:{\Z}/p{\Z}\rightarrow{\Z}/p{\Z}$ 
an affine map defined by $f(x)=\alpha x+\beta$ 
with $\alpha\not\equiv 0$ and $\beta\in{\Z}/p{\Z}$. 
Then there is a non-trivially $p$-colored diagram 
$(D,C')$ of $K$ such that 
${\rm Im}(C')=f({\rm Im}(C))$. 
\end{lemma}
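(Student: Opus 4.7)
The plan is to take the obvious candidate $C' := f \circ C$, that is, recolor each arc of $D$ by applying the affine map $f$ to its current color, leaving the underlying diagram $D$ unchanged. The proof then reduces to two verifications: that $C'$ is still a $p$-coloring, and that it is still non-trivial with the desired image.

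For the coloring condition, consider any crossing of $D$ and let $a,b,c \in \mathbb{Z}/p\mathbb{Z}$ denote the $C$-colors of its two under-arcs and its over-arc, respectively. Since $C$ is a $p$-coloring we have $a+b \equiv 2c \pmod{p}$. Applying the affine map gives
\[
f(a)+f(b) = \alpha a + \beta + \alpha b + \beta = \alpha(a+b) + 2\beta \equiv 2\alpha c + 2\beta = 2f(c) \pmod{p},
\]
so $C'$ satisfies the coloring relation at every crossing. Hence $C'$ is a $p$-coloring of $D$.

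For non-triviality and the image, note that $\alpha \not\equiv 0 \pmod{p}$ implies $f : \mathbb{Z}/p\mathbb{Z} \to \mathbb{Z}/p\mathbb{Z}$ is a bijection. Therefore $\mathrm{Im}(C') = f(\mathrm{Im}(C))$ directly from the definition $C' = f \circ C$, and $\#\mathrm{Im}(C') = \#\mathrm{Im}(C) > 1$, so $C'$ is non-trivial. The pair $(D, C')$ is thus the required $p$-colored diagram of $K$.

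There is essentially no obstacle here; the only point to be careful about is using both $\alpha \not\equiv 0$ (for bijectivity, hence preservation of non-triviality and the image description) and the specific form of $f$ (so that the factor $\alpha$ on the left and the additive $2\beta$ on the right of the crossing relation match up to give $2f(c)$). Since the diagram $D$ is not modified, no Reidemeister moves or geometric arguments are needed.
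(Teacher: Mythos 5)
Your proof is correct and follows exactly the paper's approach: the paper also sets $C' = f\circ C$ and observes that it is a non-trivial $p$-coloring with the required image. You have simply spelled out the verification of the crossing relation and the bijectivity argument that the paper leaves as "easy to see."
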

%%%%%%%%%%%%%%

\begin{proof}
It is easy to see that the composition 
$C'=f\circ C$ is also a non-trivial $p$-coloring of $D$. 
\end{proof}

Now, we consider the case $p=11$. 
By Theorem~\ref{thm24}, 
if the palette graph $G(S)$ of a subset $S\subset {\Z}/11{\Z}$ 
is connected with $\#S>1$, 
then $\#S\geq 5$.

%%%%%%%%%%%%%%
\begin{theorem}[{\cite[Theorem~12]{GJKLZ}}]\label{thm211}
Let $S$ be a subset of ${\Z}/11{\Z}$. 
If the palette graph $G(S)$ is connected 
with $\#S=5$, 
then $G(S)$ is isomorphic to $G(\{1,4,6,7,8\})$ or $G(\{0,4,6,7,8\})$ 
as shown in {\rm Figure~\ref{fig2-01}}. 
\hfill$\Box$
\end{theorem}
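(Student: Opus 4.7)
The plan is to classify, up to affine equivalence, all $5$-subsets $S\subset\Z/11\Z$ whose palette graph $G(S)$ is connected. By Theorem~\ref{thm26} this classification is exactly the classification up to palette-graph isomorphism, so it suffices to exhibit exactly two affine orbits, represented by $\{1,4,6,7,8\}$ and $\{0,4,6,7,8\}$.

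First I would normalize. The affine group of $\Z/11\Z$ acts sharply transitively on ordered pairs of distinct elements, so every $S$ is affinely equivalent to one with $\{0,1\}\subset S$. Writing $S=\{0,1,a,b,c\}$ with $\{a,b,c\}\subset\{2,3,\dots,10\}$ leaves $\binom{9}{3}=84$ candidates; for each, the palette graph is determined by computing $6(x+y)\bmod 11$ for all pairs $x,y\in S$ and testing connectivity (equivalently, checking for a spanning tree, hence at least four edges).

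Next I would invoke an orbit-count to pin down how much enumeration is needed. By Lemma~\ref{lem25} and Theorem~\ref{thm26}, every affine orbit of a connected palette graph with five vertices has exactly $110$ elements. A straightforward double count (each such set contains $5\cdot 4 =20$ ordered pairs, and by transitivity each of the $110$ ordered pairs of distinct elements of $\Z/11\Z$ lies in the same number of orbit members) shows that every orbit contributes exactly $20$ normalized sets containing $\{0,1\}$. So the claim reduces to verifying that precisely $40$ of the $84$ candidates yield connected palette graphs, partitioned into two orbits of size $20$. I would take explicit representatives by affinely mapping $\{1,4,6,7,8\}$ and $\{0,4,6,7,8\}$ into normalized position, and confirm the two orbits are distinct by comparing an invariant of the palette graph; a direct computation gives $5$ edges in $G(\{1,4,6,7,8\})$ versus $4$ in $G(\{0,4,6,7,8\})$, so no affine map can identify them.

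The main obstacle is bookkeeping: the enumeration and orbit identification are finite and elementary, but organizing $84$ arithmetic cases without error is the real work. A useful refinement is to normalize further by also fixing a distinguished edge of $G(S)$—for instance, one whose middle-vertex label equals $0$, reducing the problem to a handful of triples $(a,b,c)$—provided one first verifies that every connected $G(S)$ admits such an edge under some affine map. Alternatively, one could bypass brute force by classifying the possible edge multisets of $G(S)$ directly: connectedness together with $\#S=5$ forces a narrow range of degree sequences, and eliminating each putative sequence other than those realized by the two proposed classes would yield the result structurally.
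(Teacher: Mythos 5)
The paper offers no proof of this statement at all: it is imported verbatim from \cite[Theorem~12]{GJKLZ} and closed with a $\Box$, so your proposal is necessarily a different route --- it is an attempt at a self-contained proof using machinery the paper does develop. The skeleton is sound. Sharp transitivity of the affine group of ${\Z}/11{\Z}$ on ordered pairs of distinct elements does reduce the problem to the $\binom{9}{3}=84$ five-sets containing $\{0,1\}$; Lemma~\ref{lem25} combined with Theorem~\ref{thm26} does show that each isomorphism class of connected palette graphs on five vertices is a single affine orbit of exactly $110$ sets, and your double count ($110\cdot 10=1100$ incidences spread over $55$ unordered pairs) correctly yields $20$ normalized representatives per orbit; and the edge count is a legitimate separating invariant, since $G(\{1,4,6,7,8\})$ has the five edges $\{1|8|4\}$, $\{1|4|7\}$, $\{4|6|8\}$, $\{6|1|7\}$, $\{6|7|8\}$ while $G(\{0,4,6,7,8\})$ has only the four edges $\{0|4|8\}$, $\{4|0|7\}$, $\{4|6|8\}$, $\{6|7|8\}$. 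Your further normalization is also valid: a connected $G(S)$ with $\#S\geq 2$ has an edge, whose label can be translated to $0$ and whose endpoints scaled to $\pm 1$, so one may assume $\{0,1,10\}\subset S$ and check only $\binom{8}{2}=28$ candidates. What your approach buys is independence from \cite{GJKLZ}; what it still owes is the one piece of substance, namely the actual verification that exactly $40$ of the $84$ (equivalently, the right number of the $28$) normalized sets have connected palette graphs and that no third orbit appears. As written this count is asserted, not performed, so the argument is a correct and complete reduction to a finite check rather than a finished proof; the gap is purely computational, not conceptual.
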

%%%%%%%%%%%%%%

%%%%%%%%%%%%%%
\begin{figure}[htb]
\begin{center}
\includegraphics[bb=0 0 248 91]{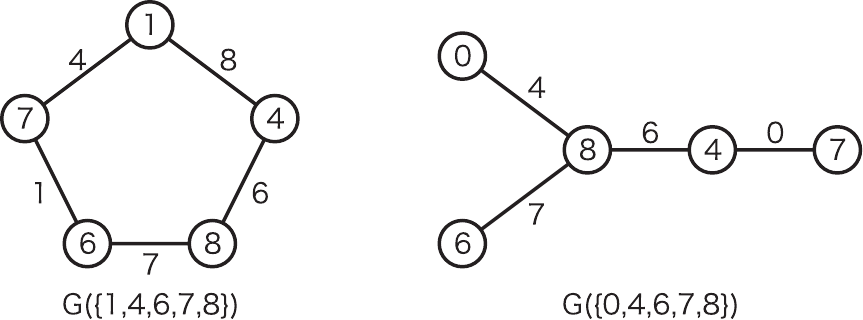}
\caption{}
\label{fig2-01}
\end{center}
\end{figure}
%%%%%%%%%%%%%%

By Theorem~\ref{thm29} or \cite[Theorem~2.4]{LM}, 
we have ${\rm C}_{11}(K)\geq 5$. 
The following theorem implies that ${\rm C}_{11}(K)=5$ or $6$. 

%%%%%%%%%%%%%%
\begin{theorem}[\cite{CJZ}]\label{thm212}
For any $11$-colorable knot $K$, 
there is a non-trivially $11$-colored diagram $(D,C)$ of $K$ 
with ${\rm Im}(C)\subset\{0,1,4,6,7,8\}$. 
\hfill$\Box$
\end{theorem}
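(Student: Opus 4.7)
The plan is to begin with any non-trivial $11$-colored diagram $(D_0,C_0)$ of $K$, which exists by the hypothesis of $11$-colorability, and to transform it by Reidemeister moves (combined with an affine renormalization via Lemma~\ref{lem210} if convenient) into a diagram whose palette lies inside the target set $\{0,1,4,6,7,8\}$.

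The main tool is the effect of R-moves on arc colors: an R2 insertion that pushes a strand of color $c$ across an arc of color $a$ replaces a segment of the $a$-arc by a new arc of color $2c-a\pmod{11}$, flanked by two arcs that remain colored $a$, while its inverse (R2 elision) removes an arc whose color is redundantly determined by its neighbors; R3 leaves the coloring unchanged. Together these moves form a local recoloring toolkit for editing the palette in place.

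The strategy is to peel off the bad colors $\{2,3,5,9,10\}$ one at a time. For each bad color $b$, a direct calculation in ${\Z}/11{\Z}$ produces a good helper $c\in\{0,1,4,6,7,8\}$ with $2c-b\in\{0,1,4,6,7,8\}$; for instance $c=0$ handles $b=3$ (giving $8$), $c=1$ handles $b=2,5,9$ (giving $0,8,4$), and $c=8$ handles $b=10$ (giving $6$). For each arc of color $b$ in the current diagram I would route a $c$-colored helper strand next to it by preparatory R1/R2/R3 moves, apply a sequence of color-swaps to subdivide the bad arc into short pieces flanked by good-colored neighbors, and then compress the remaining $b$-fragments out of existence by R2 elision. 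A lexicographic invariant such as
\[
\bigl(\#(\mathrm{Im}(C)\cap\{2,3,5,9,10\}),\;\#\{\text{bad-colored arcs of }D\}\bigr)
\]
strictly decreases at each round, so the process terminates.

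The main obstacle will be the preparatory step: guaranteeing that a helper strand of the prescribed good color can always be brought adjacent to the target bad arc without being forced to pass through---and thereby revive---a previously eliminated color. This reduces to a connectivity question about the subgraph $G(\{0,1,4,6,7,8\})$, whose eight edges (coming from midpoint identities such as $(1{+}7)/2\equiv 4$, $(4{+}7)/2\equiv 0$, and $(6{+}7)/2\equiv 1\pmod{11}$) should leave enough room to route helpers through only good colors. Verifying this by a finite case analysis, and ordering the elimination of the five bad colors so that the needed helpers remain available throughout, is where the bulk of the technical work lies; once established, the iteration above produces the desired diagram $(D,C)$ with $\mathrm{Im}(C)\subset\{0,1,4,6,7,8\}$.
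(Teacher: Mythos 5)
First, a point of context: the paper does not prove this statement at all --- it is quoted from the 25-page paper of Cheng, Jin, and Zhao \cite{CJZ} with a $\Box$ and no argument, so there is no internal proof to compare against. Your proposal is therefore being measured against the task of actually establishing the result, and as written it is a plan rather than a proof: the step you yourself flag as ``where the bulk of the technical work lies'' is precisely the content of the theorem, and it is left unverified.

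The concrete gaps are these. (1) Your elimination step assumes that once a bad-colored fragment is ``flanked by good-colored neighbors'' it can be ``compressed out of existence by R2 elision.'' But the endpoints of a $b$-colored arc are under-crossings of the original diagram, not a bigon; an R2 elision is only available when the two bounding crossings form a cancelable pair, which generically they do not. Compare how this paper removes a single color $0$ from a six-color palette (Lemma~\ref{lem31} and the proof of Theorem~\ref{thm11}(i)): it requires classifying the colors of the two end-crossings of each $0$-arc into cases and performing bespoke local deformations for each, not a generic elision. (2) Your termination invariant is asserted, not proved. Routing a $c$-colored helper to a bad arc forces it to cross over arcs of various colors $a$, each creating a new arc of color $2c-a$; if any intermediate $a$ is itself bad or forces $2c-a$ outside $\{0,1,4,6,7,8\}$, the second coordinate of your lexicographic invariant can increase, and a previously eliminated color can be revived, breaking the first coordinate as well. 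The connectivity of $G(\{0,1,4,6,7,8\})$ does not by itself control this, because at the start of the process the diagram may use all eleven colors and the relevant palette graph is $G(\mathbb{Z}/11\mathbb{Z})$, not the six-vertex graph. (3) Removing a color $b$ also requires handling crossings where $b$ is the \emph{over}-arc color, i.e.\ colors $\{x|b|y\}$; covering an arc with a helper changes under-arc colors but does nothing to eliminate $b$ as a label of over-crossings, and this is a separate obstruction (again compare Lemma~\ref{lem31}(ii), which is devoted exactly to killing crossings of color $\{*|0|*\}$). Your helper-value computations in $\mathbb{Z}/11\mathbb{Z}$ are correct and the general strategy (cover bad arcs with good over-strands, track everything through the palette graph) is the same flavor as Sections~3--6 of this paper, but each single-color removal there occupies a full section of case analysis; removing five colors from an arbitrary $11$-coloring is the substance of \cite{CJZ} and is not supplied here.
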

%%%%%%%%%%%%%%

Figure~\ref{fig2-02} shows the palette graph 
$G(\{0,1,4,6,7,8\})$. 
By Lemma~\ref{lem23}, 
the two graphs in Theorem~\ref{thm211} 
are obtained from this graph by deleting 
the vertex $a$ and the edges labeled $a$ 
for $a=0,1$, respectively. 

%%%%%%%%%%%%%%
\begin{figure}[htb]
\begin{center}
\includegraphics[bb=0 0 86 91]{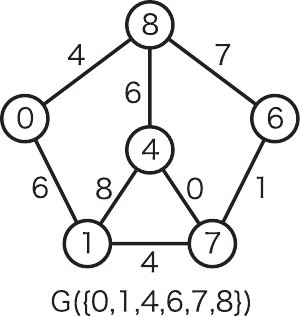}
\caption{}
\label{fig2-02}
\end{center}
\end{figure}
%%%%%%%%%%%%%%

It is useful for our argument to modify Theorem~\ref{thm212} slightly as follows.

%%%%%%%%%%%%%%
\begin{lemma}\label{lem213}
For any $11$-colorable knot $K$, 
there is an $11$-colored diagram $(D,C)$ of $K$ 
with ${\rm Im}(C)=\{0,1,4,6,7,8\}$. 
\end{lemma}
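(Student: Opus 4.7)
The starting point is Theorem~\ref{thm212}, which gives a non-trivially $11$-colored diagram $(D,C)$ of $K$ with ${\rm Im}(C)\subset\{0,1,4,6,7,8\}$. The plan is to argue that either this inclusion is already an equality, or the image can be enlarged to the full six-element set by a single Reidemeister~II move applied to $D$.

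Theorem~\ref{thm29} gives $\#{\rm Im}(C)\geq\lfloor\log_2 11\rfloor+2=5$, so ${\rm Im}(C)$ is either all of $\{0,1,4,6,7,8\}$, in which case we are done, or a $5$-element subset. In the latter case Lemma~\ref{lem28} tells us that $G({\rm Im}(C))$ is connected, Theorem~\ref{thm211} identifies $G({\rm Im}(C))$ with either $G(\{1,4,6,7,8\})$ or $G(\{0,4,6,7,8\})$ up to palette-graph isomorphism, and Theorem~\ref{thm26} upgrades this isomorphism to an affine map $f(x)=\alpha x+\beta$ of $\Z/11\Z$ carrying ${\rm Im}(C)$ onto one of the two standard sets. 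Applying Lemma~\ref{lem210} to $f$ replaces $C$ by $C'=f\circ C$, a non-trivial $11$-coloring of the same diagram $D$ whose image is exactly $\{1,4,6,7,8\}$ or $\{0,4,6,7,8\}$.

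It remains to introduce the single missing color by an R2 move. If a piece of an $a$-arc is pushed across and over a piece of a $c$-arc, the Fox relation at each of the two new crossings forces the middle under-segment to receive color $2a-c\pmod{11}$. Thus in the first case an R2 of a $4$-arc over an $8$-arc creates a new arc of color $2\cdot 4-8\equiv 0$, and in the second case an R2 of a $4$-arc over a $7$-arc creates a new arc of color $2\cdot 4-7\equiv 1$. In each case both chosen colors are already present in ${\rm Im}(C')$, so the required arcs exist in $D$, and since the R2 move preserves the knot type and leaves the colors of all other arcs unchanged, the modified diagram is an $11$-colored diagram of $K$ with image exactly $\{0,1,4,6,7,8\}$.

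The only substantive step is the combinatorial search for, in each of the two standard five-color palette graphs, a pair $(a,c)$ of already-used colors with $2a-c$ equal to the target missing color; once such pairs are exhibited as above, the R2 move itself is realized by a routine planar isotopy bringing the chosen arcs alongside one another, so no further difficulty arises.
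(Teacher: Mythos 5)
Your reduction to the two standard five-element sets via Theorem~\ref{thm211}, Theorem~\ref{thm26}, and Lemma~\ref{lem210} is fine (and is a legitimate shortcut past the paper's case-by-case elimination of which five-element subsets of $\{0,1,4,6,7,8\}$ can occur), and your local computation is correct: an R2 move pushing an $a$-arc over a $c$-arc creates an under-segment of color $2a-c$, so $(a,c)=(4,8)$ yields $0$ and $(a,c)=(4,7)$ yields $1$. The gap is in the last step, where you assert that the R2 move between a chosen $4$-arc and a chosen $8$-arc (or $7$-arc) ``is realized by a routine planar isotopy bringing the chosen arcs alongside one another, so no further difficulty arises.'' A planar isotopy of the diagram cannot change which arcs are adjacent in the plane; if the two chosen arcs are separated by other strands, bringing them together forces the travelling finger to cross those strands, creating new crossings whose new under-arcs are recolored by the Fox rule. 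For ${\rm Im}(C')=\{1,4,6,7,8\}$, dragging a $4$-finger over a $6$-arc creates the color $2\cdot 4-6=2$, and dragging it under a $1$- or $7$-arc creates $9$ or $10$; so without further control the image can leave $\{0,1,4,6,7,8\}$ entirely. This is exactly the difficulty the paper spends Sections~\ref{sec3}--\ref{sec6} managing (eliminating trivial crossings of a given color so that the arcs of that color have connected complement, and checking that every unavoidable over-crossing only produces permitted colors).

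The paper's proof of this lemma avoids the dragging problem altogether: it uses the connectivity of the palette graph (Lemma~\ref{lem28}) to show that a crossing of a specific color must already be \emph{present} in the diagram --- e.g.\ for ${\rm Im}(C)=\{1,4,6,7,8\}$, deleting the edges $\{6|1|7\}$ and $\{1|8|4\}$ disconnects $G(\{1,4,6,7,8\})$, so one of these crossings exists --- and then performs your R2-type move \emph{locally} at that crossing, where the two needed arcs are already adjacent ($2\cdot 6-1\equiv 0$ at a $\{6|1|7\}$ crossing, $2\cdot 4-8\equiv 0$ at a $\{1|8|4\}$ crossing). To repair your argument you would need either this forced-crossing observation or a careful finger-move analysis in the style of Lemma~\ref{lem43}; as written, the realizability of your R2 move is unjustified.
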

%%%%%%%%%%%%%%

\begin{proof}
We may assume that 
$(D,C)$ satisfies Theorem~\ref{thm212}; that is, 
it is a non-trivially $11$-colored diagram 
with ${\rm Im}(C)\subset \{0,1,4,6,7,8\}$. 
We remark that $\#{\rm Im}(C)\geq 5$ 
by Theorem~\ref{thm29}. 

\underline{$4,6,7\in{\rm Im}(C)$.} 
Assume that $4\not\in{\rm Im}(C)$. 
It follows that ${\rm Im}(C)=\{0,1,6,7,8\}$. 
The palette graph 
$G({\rm Im}(C))$ is as shown in the left of Figure~\ref{fig2-03} 
by Lemma~\ref{lem23}, 
which contradicts to the connectivity in Lemma~\ref{lem28}. 
We can also prove $6,7\in{\rm Im}(C)$ 
by a similar argument. 
See the center and right of the figure.

%%%%%%%%%%%%%%
\begin{figure}[htb]
\begin{center}
\includegraphics[bb=0 0 302 90]{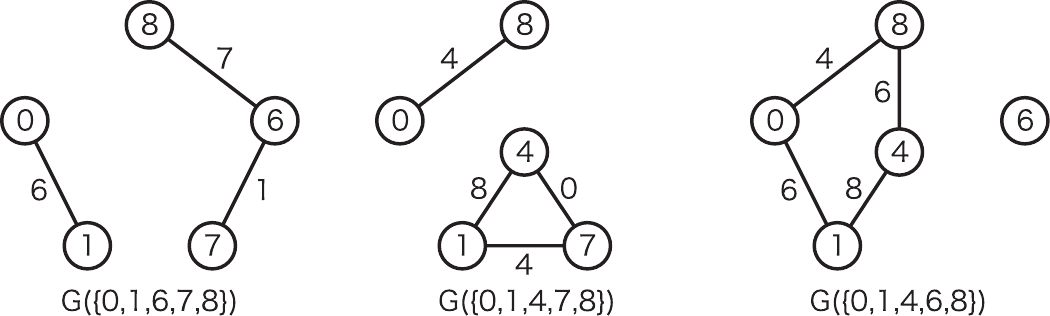}
\caption{}
\label{fig2-03}
\end{center}
\end{figure}
%%%%%%%%%%%%%%

\underline{$0\in{\rm Im}(C)$.} 
Assume that $0\not\in{\rm Im}(C)$. 
It follows that ${\rm Im}(C)=\{1,4,6,7,8\}$ 
and its palette graph is 
as shown in the left of Figure~\ref{fig2-01}. 
Then we see that $(D,C)$ has a crossing of color $\{6|1|7\}$ or $\{1|8|4\}$. 
In fact, if we delete the corresponding edges both, 
the resulting graph becomes disconnected.  
By deforming the diagram near these crossings as shown in Figure~\ref{fig2-04}, 
we can produce a $0$-arc. 
We replace the original diagram with the new one as $(D,C)$.

%%%%%%%%%%%%%%
\begin{figure}[htb]
\begin{center}
\includegraphics[bb=0 0 320 48]{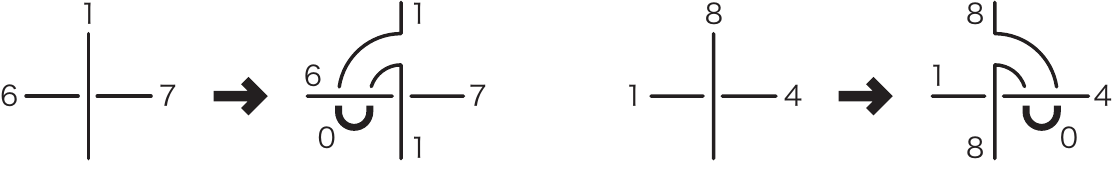}
\caption{}
\label{fig2-04}
\end{center}
\end{figure}
%%%%%%%%%%%%%%

\underline{$1\in{\rm Im}(C)$.} 
Assume that $1\not\in{\rm Im}(C)$. 
Then we have ${\rm Im}(C)=\{0,4,6,7,8\}$ 
and its palette graph is as shown the right of Figure~\ref{fig2-01}. 
Since $(D,C)$ must have a crossing of color $\{0|4|8\}$ 
by a similar reason to the above case, 
we deform the diagram near the crossing 
to make a $1$-arc. 
See Figure~\ref{fig2-05}.

%%%%%%%%%%%%%%
\begin{figure}[htb]
\begin{center}
\includegraphics[bb=0 0 139 46]{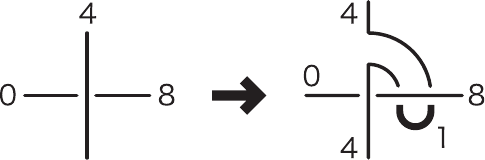}
\caption{}
\label{fig2-05}
\end{center}
\end{figure}
%%%%%%%%%%%%%%

\underline{$8\in{\rm Im}(C)$.} 
Assume that $8\not\in{\rm Im}(C)$. 
Then we have ${\rm Im}(C)=\{0,1,4,6,7\}$ 
and its palette graph is as shown in the left of Figure~\ref{fig2-06}. 
We remark that the map $f:{\Z}/11{\Z}\rightarrow{\Z}/11{\Z}$ 
defined by $f(x)=7x+6$ induces 
the isomorphism between 
$G(\{0,4,6,7,8\})$ and $G(\{0,1,4,6,7\})$. 
The existence of such a map is guaranteed by Theorem~\ref{thm26}. 
Since $(D,C)$ has a crossing of color $\{4|0|7\}$, 
we deform the diagram near the crossing 
as shown in the right of the figure 
so that we obtain an $8$-arc. 
\end{proof}

%%%%%%%%%%%%%%
\begin{figure}[htb]
\begin{center}
\includegraphics[bb=0 0 310 69]{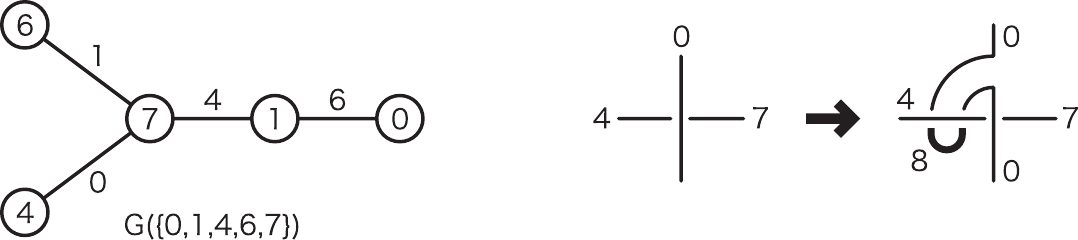}
\caption{}
\label{fig2-06}
\end{center}
\end{figure}
%%%%%%%%%%%%%%

%%%%%%%%%%%%%%%%%%%%%%%%%
%%%%%%%%%%%%%%%%%%%%%%%%%
%%%%Section 3
%%%%%%%%%%%%%%%%%%%%%%%%%
%%%%%%%%%%%%%%%%%%%%%%%%%

\section{Proof of Theorem~\ref{thm11}{\rm (i)}}\label{sec3}

%%%%%%%%%%%%%%
\begin{lemma}\label{lem31}
For any $11$-colorable knot $K$, 
there is an $11$-colored diagram $(D,C)$ of $K$ 
such that 
\begin{itemize}
\item[{\rm (i)}] 
${\rm Im}(C)=\{0,1,4,6,7,8\}$, and 
\item[{\rm (ii)}] 
there is no crossing of color $\{*|0|*\}$. 
\end{itemize}
\end{lemma}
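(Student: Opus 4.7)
The plan is to start from the $11$-colored diagram $(D,C)$ of $K$ given by Lemma~\ref{lem213}, with $\mathrm{Im}(C)=\{0,1,4,6,7,8\}$, and modify it by Reidemeister moves (keeping the palette inside $\{0,1,4,6,7,8\}$) so that no crossing has over-color $0$; at the end, any color that has been temporarily lost can be reinstated by the local constructions used in the proof of Lemma~\ref{lem213}, whose new crossings all have over-colors in $\{4,6,8\}$ and therefore do not reintroduce any $\{*|0|*\}$ crossing.

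First I would catalogue the crossings that must be removed. By Lemma~\ref{lem27}, a non-trivial crossing with over-arc colored $0$ corresponds to an edge of $G(\{0,1,4,6,7,8\})$ labeled $0$, and a direct midpoint computation modulo $11$ (consistent with Figure~\ref{fig2-02}) shows that the only such edge is $\{4|0|7\}$. The remaining $\{*|0|*\}$ crossings are the trivial $\{0|0|0\}$ self-crossings of a $0$-arc. Reading off the same palette graph, the only neighbors of $0$ are $1$ (via label $6$) and $8$ (via label $4$), so every $0$-arc must terminate at under-crossings of color $\{1|6|0\}$ or $\{8|4|0\}$.

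The main step is to eliminate the $\{4|0|7\}$ crossings one at a time. For such a crossing $\kappa$, I would follow its over-$0$-arc to an endpoint $\kappa'$ of color $\{1|6|0\}$ or $\{8|4|0\}$ and slide $\kappa$ past $\kappa'$ by a short sequence of Reidemeister~II and III moves (typically an R2 producing the auxiliary crossing required for an R3, then R3, then an R2 cancellation). In the resulting local picture, the strand that was a $0$-over-arc at $\kappa$ becomes an arc of color $1$ or $8$ over the replacement crossings, and one checks via $a+b\equiv 2c\pmod{11}$ that all new over-colors lie in $\{1,4,6,7,8\}$, so the number of $\{*|0|*\}$ crossings strictly decreases. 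Trivial $\{0|0|0\}$ crossings are then untangled by ordinary R1/R2 moves inside the monochromatic $0$-region. The hard part will be the explicit local case analysis supporting the slide: for each endpoint type ($\{1|6|0\}$ vs $\{8|4|0\}$), each sign of $\kappa$ and of $\kappa'$, and each relative orientation of the strands entering them, I need to exhibit the precise Reidemeister sequence and verify that every newly created crossing satisfies $a+b\equiv 2c\pmod{11}$ and has over-color different from $0$. Once this finite verification is in hand, iterating the slide over all $\{4|0|7\}$ crossings completes the proof.
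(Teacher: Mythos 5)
Your overall skeleton (start from Lemma~\ref{lem213}, first kill the non-trivial crossings $\{4|0|7\}$, then the trivial ones $\{0|0|0\}$) matches the paper, and your catalogue of the relevant edges of $G(\{0,1,4,6,7,8\})$ is correct. But both of your elimination steps have genuine gaps. For the main step, the criterion you propose to verify --- that all \emph{new over-colors} lie in $\{1,4,6,7,8\}$ --- is the wrong one: you must also keep every new \emph{arc} color inside $\{0,1,4,6,7,8\}$ to preserve condition (i), and the slide fails this whenever the endpoint $\kappa'$ you slide past has color $\{0|6|1\}$. After the R2/R3 sequence the former under-strand of $\kappa$ passes under the $6$-arc and then under the $1$-arc (or vice versa), and $4\mapsto 2\cdot 6-4=8\mapsto 2\cdot 1-8=5$ (from the other end, $7\mapsto 2\cdot 6-7=5$), so the color $5$ is forced to appear. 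You cannot always route toward a $\{0|4|8\}$ endpoint, since both endpoints of the over-$0$-arc may be of type $\{0|6|1\}$ (and sliding past intermediate $\{4|0|7\}$ or $\{0|0|0\}$ crossings on the way creates further crossings you have not controlled, e.g.\ $\{0|7|3\}$ or $\{4|7|10\}$). The deferred ``finite verification'' would therefore not close the argument; it would expose that this route needs a different idea. The paper instead performs a purely local deformation at each $\{4|0|7\}$ crossing (Figure~\ref{fig3-01}) that leaves the set of colors unchanged; equivalently, the only palette-preserving two-step under-detour replacing ``under $0$'' for the $4$--$7$ strand is ``under $8$, then under $4$'' ($4\mapsto 1\mapsto 7$), which is what has to be built into the local picture.

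The second step is also not correct as stated: trivial $\{0|0|0\}$ crossings cannot in general be ``untangled by ordinary R1/R2 moves inside the monochromatic $0$-region.'' The $0$-colored strands form a tangle whose endpoints are pinned at crossings with other colors, and two $0$-strands can clasp or link each other; such crossings are not removable by moves confined to the $0$-colored part. This is exactly why the paper's proof walks along the diagram from a $\{0|0|0\}$ crossing $x$ to the \emph{first non-trivial crossing} $y$ (necessarily of color $\{0|6|1\}$ or $\{0|4|8\}$, since these are the only edges of the palette graph incident to the vertex $0$) and performs a deformation involving both $x$ and $y$ (Figure~\ref{fig3-02}) to strictly decrease the number of $\{0|0|0\}$ crossings. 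Some interaction with the non-$0$-colored part of the diagram is unavoidable here, and your proposal omits it.
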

%%%%%%%%%%%%%%

\begin{proof}
We may assume that $(D,C)$ satisfies Lemma~\ref{lem213}. 
There are two types of crossings of $(D,C)$ 
whose over-arc is a $0$-arc; 
that is, $\{0|0|0\}$ and $\{4|0|7\}$. 
In fact, in the palette graph $G(\{0,1,4,6,7,8\})$, 
the only edge labeled $0$ connects $4$ and $7$. 

First, we assume that $(D,C)$ has crossings of color $\{4|0|7\}$. 
By deforming the diagram near the crossings as shown in Figure~\ref{fig3-01}, 
we can eliminate all the crossings of color $\{4|0|7\}$. 
We remark that the set of colors which are appeared in the diagram 
does not change.

%%%%%%%%%%%%%%
\begin{figure}[htb]
\begin{center}
\includegraphics[bb=0 0 154 54]{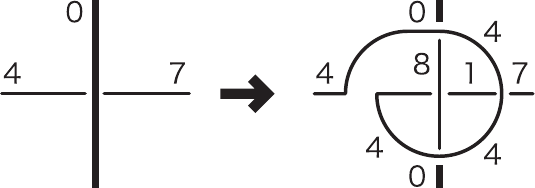}
\caption{}
\label{fig3-01}
\end{center}
\end{figure}
%%%%%%%%%%%%%%

Next, we assume that $(D,C)$ has a crossing of color $\{0|0|0\}$, say $x$. 
Walking along the diagram from $x$, 
let $y$ be the non-trivial crossing which we meet first. 
If there are crossings of color $\{0|0|0\}$ between $x$ and $y$, 
we replace the original $x$ with the nearest one to $y$. 
Therefore, we may assume that 
there is no crossing between $x$ and $y$. 

There are two cases with respect to the color of $y$. 
In fact, in the palette graph $G(\{0,1,4,6,7,8\})$, 
there are two edges incident to the vertex $0$, 
which implies that the color of $y$ is $\{0|6|1\}$ or $\{0|4|8\}$. 
In each case, we deform the diagram $(D,C)$ near $x$ and $y$ 
as shown in Figure~\ref{fig3-02}, 
so that the number of crossings of $\{0|0|0\}$ is decreased. 
By repeating this process, 
we obtain a diagram with no crossing of $\{0|0|0\}$ finally. 
\end{proof}

%%%%%%%%%%%%%%
\begin{figure}[htb]
\begin{center}
\includegraphics[bb=0 0 342 55]{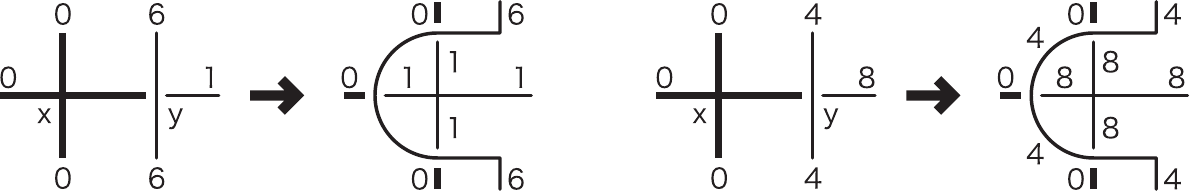}
\caption{}
\label{fig3-02}
\end{center}
\end{figure}
%%%%%%%%%%%%%%

\begin{proof}[Proof of {\rm Theorem~\ref{thm11}(i)}.] 
We may assume that $(D,C)$ satisfies Lemma~\ref{lem31}. 
If there is a $0$-arc, 
it is not an over-arc of any crossing, 
and its endpoints are the under-crossings 
of color $\{0|4|8\}$ or $\{0|6|1\}$. 
In fact, there are two edges incident to the vertex $0$ 
in $G(\{1,4,6,7,8\})$. 
We have three cases with respect to 
the colors of the crossings of the endpoints of a $0$-arc; 
\begin{itemize}
\item[(i)] 
$\{0|4|8\}$ and $\{0|6|1\}$, 
\item[(ii)] 
$\{0|4|8\}$ both, 
and 
\item[(iii)] 
$\{0|6|1\}$ both. 
\end{itemize}

For the case (i), 
we deform the $6$-arc 
over the crossing of $\{0|4|8\}$ 
to eliminate the $0$-arc. 
See the top of Figure~\ref{fig3-03}. 
For the case (ii), 
we deform one of the crossings of color $\{0|4|8\}$ 
as shown in the figure so that 
we reduce this case to (i). 
Similarly, for the case (iii), 
we deform one of the crossings of color $\{0|6|1\}$ 
as shown in the figure so that 
we reduce this case to (i). 
See the bottom of the figure. 
\end{proof}

%%%%%%%%%%%%%%
\begin{figure}[htb]
\begin{center}
\includegraphics[bb=0 0 325 244]{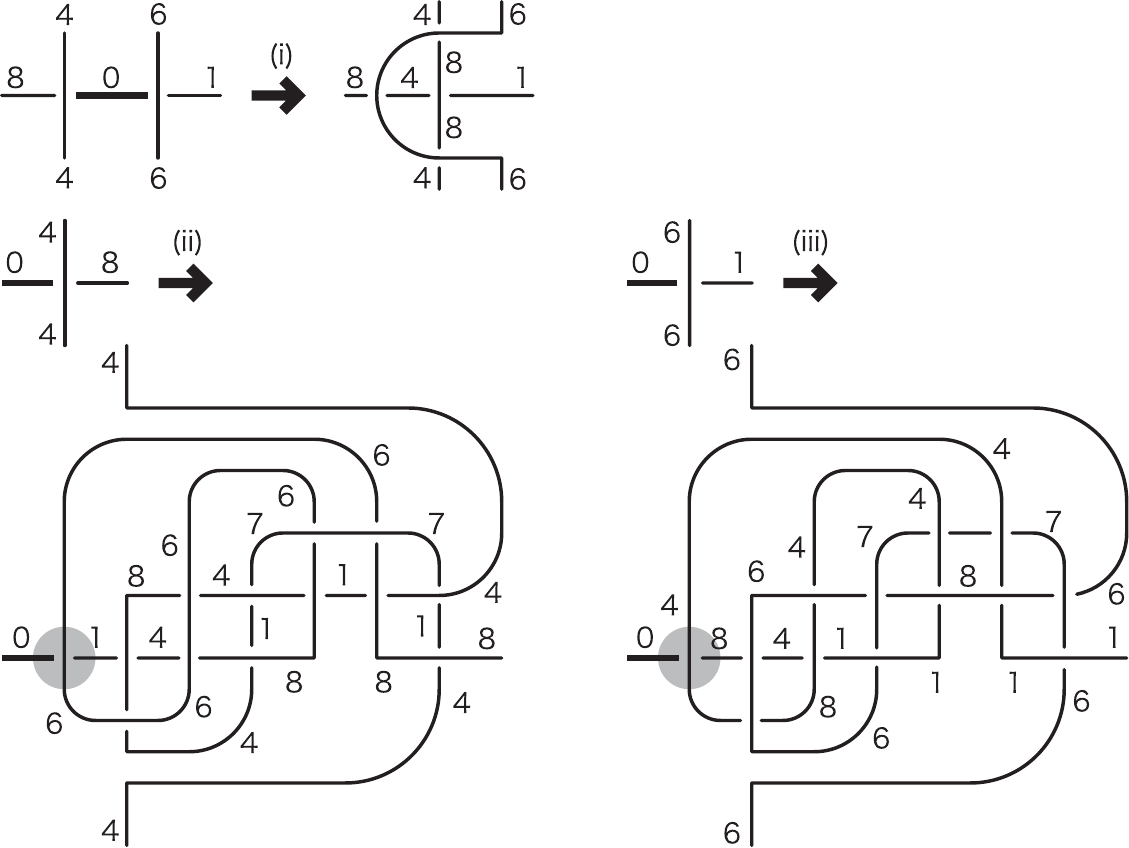}
\caption{}
\label{fig3-03}
\end{center}
\end{figure}
%%%%%%%%%%%%%%

%%%%%%%%%%%%%%
\begin{corollary}\label{cor32}
For any $11$-colorable knot $K$ and $a\not\equiv b\in{\Z}/11{\Z}$, 
there is an $11$-colored diagram $(D,C)$ of $K$ 
with $${\rm Im}(C)=\{a,b,3a+9b,6a+6b, 10a+2b\}.$$
\end{corollary}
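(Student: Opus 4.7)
The plan is to combine Theorem~\ref{thm11}(i), which provides a diagram with image $\{1,4,6,7,8\}$, with Lemma~\ref{lem210}, which lets me transport a coloring along any affine automorphism of $\mathbb{Z}/11\mathbb{Z}$. The target set in the corollary has five elements labelled by the two parameters $a,b$, so I only need to find a single affine map $f(x)=\alpha x+\beta$ (with $\alpha\not\equiv 0$) that sends $\{1,4,6,7,8\}$ to $\{a,b,3a+9b,6a+6b,10a+2b\}$; Lemma~\ref{lem210} will then produce the required coloring.

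To find $f$, I would demand $f(1)=a$ and $f(4)=b$. This forces $3\alpha\equiv b-a\pmod{11}$, and since $3^{-1}\equiv 4\pmod{11}$, I get $\alpha\equiv 4(b-a)$ and $\beta\equiv 5a-4b\pmod{11}$. The hypothesis $a\not\equiv b$ gives $\alpha\not\equiv 0$, so $f$ is a bijection of $\mathbb{Z}/11\mathbb{Z}$ and Lemma~\ref{lem210} applies.

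The key verification is then a routine computation of $f$ on the remaining three elements of $\{1,4,6,7,8\}$, reducing modulo $11$. I would check
\[
f(6)=6\alpha+\beta\equiv 3a+9b,\quad f(7)=7\alpha+\beta\equiv 10a+2b,\quad f(8)=8\alpha+\beta\equiv 6a+6b,
\]
so that $f(\{1,4,6,7,8\})=\{a,b,3a+9b,6a+6b,10a+2b\}$ as sets. Applying Lemma~\ref{lem210} to a diagram $(D,C)$ supplied by Theorem~\ref{thm11}(i) then yields the desired diagram $(D,C')$ with $\mathrm{Im}(C')=f(\mathrm{Im}(C))$ equal to the stated five-element set.

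There is no genuine obstacle here; the only mild subtlety is the bookkeeping to recognize that the five expressions in the corollary are precisely the image of $\{1,4,6,7,8\}$ under a single affine map rather than, say, a more elaborate orbit. Once one writes $f(1)=a$, $f(4)=b$ and inverts $3$ modulo $11$, the remaining values of $f$ fall out immediately, and the non-triviality condition $\alpha\not\equiv 0$ is exactly the assumption $a\not\equiv b$.
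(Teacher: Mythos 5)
Your proposal is correct and is essentially identical to the paper's proof: the paper also applies Lemma~\ref{lem210} to the diagram from Theorem~\ref{thm11}(i) using the affine map $f(x)=4(b-a)(x-1)+a$, which is exactly your $f$ with $\alpha\equiv 4(b-a)$ and $\beta\equiv 5a-4b$. Your computed values $f(6)=3a+9b$, $f(7)=10a+2b$, $f(8)=6a+6b$ match the paper's.
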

%%%%%%%%%%%%%%

\begin{proof}
Let $f:{\Z}/11{\Z}\rightarrow{\Z}/11{\Z}$ be the affine map  
defined by $f(x)=4(b-a)(x-1)+a$. 
Since the map $f$ satisfies 
$$f(1)=a, \ f(4)=b, \ f(6)=3a+9b, \ f(7)=10a+2b, \ f(8)=6a+6b,$$
we have the conclusion by Lemma~\ref{lem210} and Theorem~\ref{thm11}(i). 
\end{proof}

%%%%%%%%%%%%%%%%%%%%%%%%%
%%%%%%%%%%%%%%%%%%%%%%%%%
%%%%Section 4
%%%%%%%%%%%%%%%%%%%%%%%%%
%%%%%%%%%%%%%%%%%%%%%%%%%

\section{Proof of Theorem~\ref{thm11}{\rm (ii)}--Part I}\label{sec4}

%%%%%%%%%%%%%%
\begin{lemma}\label{lem41}
For any $11$-colorable knot $K$, 
there is an $11$-colored diagram $(D,C)$ of $K$ 
such that 
\begin{itemize}
\item[{\rm (i)}] 
${\rm Im}(C)=\{0,1,4,6,7,8\}$, and 
\item[{\rm (ii)}] 
there is no crossing of color $\{6|6|6\}$. 
\end{itemize}
\end{lemma}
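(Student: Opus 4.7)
The plan is to mimic the structure of the proof of Lemma~\ref{lem31}. Start with an $11$-colored diagram $(D,C)$ given by Lemma~\ref{lem213}, so that ${\rm Im}(C)=\{0,1,4,6,7,8\}$; we will eliminate crossings of color $\{6|6|6\}$ one at a time via local Reidemeister-type moves that preserve ${\rm Im}(C)$.

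Given such a crossing $x$, walk along $D$ starting at $x$ and let $y$ be the first non-trivial crossing encountered. As in Lemma~\ref{lem31}, by replacing $x$ with a $\{6|6|6\}$ crossing nearer to $y$ if necessary, we may assume that every crossing strictly between $x$ and $y$ is trivial, so the walking arc carries color $6$ throughout. The palette graph $G(\{0,1,4,6,7,8\})$ of Figure~\ref{fig2-02} shows that the only edges incident to the vertex $6$ are $\{6|1|7\}$ and $\{6|7|8\}$, while the only edges with label $6$ are $\{0|6|1\}$ and $\{4|6|8\}$. Hence $y$ has one of these four colors, depending on whether the walking $6$-arc meets $y$ as an under-arc or as the over-arc.

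For each of the four cases, I would exhibit a local deformation near $x$ and $y$ that eliminates $x$, produces no new crossing of color $\{6|6|6\}$, and introduces no new color outside $\{0,1,4,6,7,8\}$. When $y$ is an under-crossing (colors $\{6|1|7\}$ or $\{6|7|8\}$), the move is a direct analog of the one in Figure~\ref{fig3-02}, with the roles of the colors relabeled. When $y$ is an over-crossing (colors $\{0|6|1\}$ or $\{4|6|8\}$), one pushes the walking $6$-arc across $y$ in a manner analogous to Figure~\ref{fig3-01}, using the identity $2\cdot 6\equiv 1\pmod{11}$ to verify that every newly created crossing has an admissible color. Iterating strictly decreases the number of $\{6|6|6\}$ crossings, so the process terminates with the desired diagram. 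The main obstacle will be the two over-crossing cases: one must ensure that the rerouting does not accidentally create a new $\{6|6|6\}$ crossing where the moved $6$-arc meets a pre-existing $6$-arc of the diagram, which requires choosing the local picture so that any new over- or under-crossing between two $6$-arcs is avoided.
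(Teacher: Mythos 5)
There is a genuine gap, and it sits exactly where you flagged ``the main obstacle.'' Your decomposition takes $y$ to be the first non-trivial crossing of either type and then claims a local elimination move in all four cases, including the two where the walking $6$-arc is the \emph{over}-arc at $y$ (colors $\{0|6|1\}$ and $\{4|6|8\}$). But at such a crossing the walking strand's color does not change, so $y$ provides no over-arc under which the two $6$-strands of $x$ can be pushed to recolor them; and recoloring is the only way to kill a $\{6|6|6\}$ crossing, since a crossing with over-arc $6$ forces both under-arcs to be $6$. Concretely, a $6$-strand passed under a $c$-arc becomes $2c-6$, which lies in $\{0,1,4,6,7,8\}$ only for $c=1$ (giving $7$) or $c=7$ (giving $8$); the arcs adjacent to a $\{4|6|8\}$ crossing are colored $4$ and $8$, which would produce the forbidden colors $2$ and $10$. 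So in the $\{4|6|8\}$ case there is no admissible local move at $y$ at all, and your appeal to an analogue of Figure~\ref{fig3-01} does not produce one.

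The paper's proof resolves this by changing the decomposition: $y$ is taken to be the first non-trivial \emph{under}-crossing of the walking strand, so its color is $\{6|1|7\}$ or $\{6|7|8\}$ and the over-arc there (colored $1$ or $7$) recolors both strands of $x$ to $7$ or $8$, turning $\{6|6|6\}$ into a different trivial color. The intermediate non-trivial over-crossings $\{0|6|1\}$ and $\{4|6|8\}$ between $x$ and $y$ are not eliminated but merely \emph{slid past}: the extra $6$-strand at $x$ is pushed \emph{over} the under-strand of each such crossing, creating a cancelling pair of crossings of the same colors $\{0|6|1\}$ or $\{4|6|8\}$ and moving $x$ one step closer to $y$ (the upper row of Figure~\ref{fig4-01}). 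Your under-crossing cases match the paper's final step, but you need to replace your two over-crossing cases with this ``slide past'' mechanism rather than a direct elimination.
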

%%%%%%%%%%%%%%

\begin{proof}
We may assume that $(D,C)$ satisfies Lemma~\ref{lem213}. 
Assume that $(D,C)$ has a crossing of color $\{6|6|6\}$, say $x$. 
Walking along the diagram from $x$, 
let $y$ be the first non-trivial {\it under}-crossing. 
If there are crossings of color $\{6|6|6\}$ between $x$ and $y$, 
then we replace the original $x$ with the nearest one to $y$. 
Then we have the following: 
\begin{itemize} 
\item[(i)] 
There is no crossing of $\{6|6|6\}$ 
between $x$ and $y$ by assumption. 
\item[(ii)] 
Every crossing between $x$ and $y$ is of color 
$\{0|6|1\}$ or $\{4|6|8\}$; 
for there are exactly two edges labeled $6$ 
in the palette graph $G(\{0,1,4,6,7,8\})$. 
\item[(iii)] 
The color of $y$ is $\{6|1|7\}$ or $\{6|7|8\}$; 
for there are exactly two edges incident to the vertex $6$ 
in the palette graph, 
which are labeled $1$ and $7$, respectively.  
\end{itemize}

Assume that there are crossings between $x$ and $y$. 
Let $z$ be the nearest crossing to $x$ among them. 
We deform the diagram near $x$ and $z$ 
as shown in the upper row of Figure~\ref{fig4-01}, 
so that the number of crossings between $x$ and $y$ is decreased. 
By repeating this process, 
we may assume that there is no crossing between $x$ and $y$. 
Then we deform the diagram near $x$ and $y$ 
as shown in the lower row of the figure 
to eliminate the color $\{6|6|6\}$. 
By repeating this process, 
we obtain a diagram with no crossing of $\{6|6|6\}$ finally. 
\end{proof}

%%%%%%%%%%%%%%
\begin{figure}[htb]
\begin{center}
\includegraphics[bb=0 0 342 127]{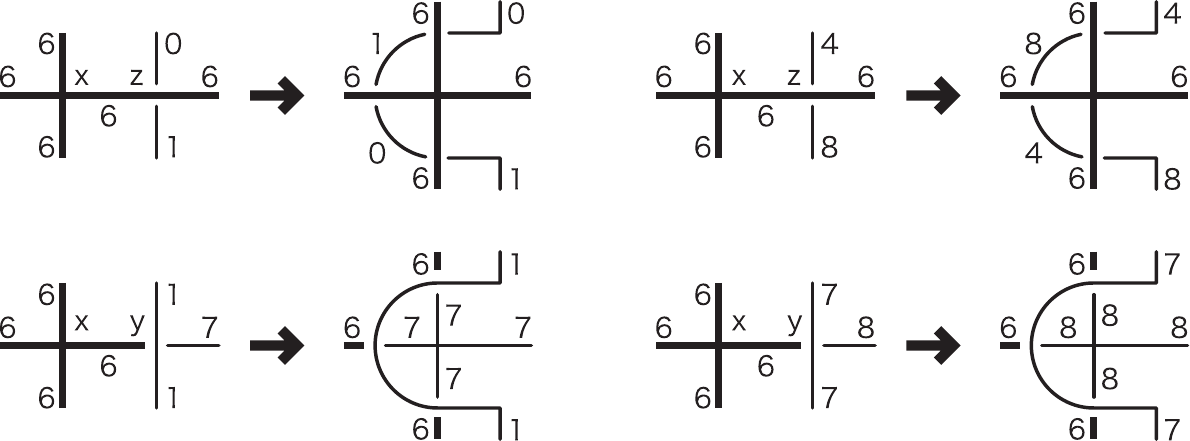}
\caption{}
\label{fig4-01}
\end{center}
\end{figure}
%%%%%%%%%%%%%%

%%%%%%%%%%%%%%
\begin{lemma}\label{lem42}
For any $11$-colorable knot $K$, 
there is an $11$-colored diagram $(D,C)$ of $K$ 
such that 
\begin{itemize}
\item[{\rm (i)}] 
${\rm Im}(C)=\{0,1,4,6,7,8\}$, and 
\item[{\rm (ii)}] 
there is no crossing of color $\{1|1|1\}$ or $\{6|6|6\}$. 
\end{itemize}
\end{lemma}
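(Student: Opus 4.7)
The plan is to mimic the argument of Lemma~\ref{lem41}, now targeting crossings of color $\{1|1|1\}$ while preserving the absence of $\{6|6|6\}$-crossings already secured there. I would begin with an $11$-colored diagram $(D,C)$ satisfying Lemma~\ref{lem41}, so that ${\rm Im}(C)=\{0,1,4,6,7,8\}$ and no crossing has color $\{6|6|6\}$.

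Suppose $(D,C)$ contains a crossing $x$ of color $\{1|1|1\}$. Walking along $D$ from $x$, let $y$ be the first non-trivial under-crossing; after replacing $x$ by the closest $\{1|1|1\}$-crossing to $y$, we may assume no $\{1|1|1\}$-crossing lies strictly between $x$ and $y$. Inspection of the palette graph $G(\{0,1,4,6,7,8\})$ shows that the only edge labeled $1$ is $\{6|1|7\}$, so every crossing strictly between $x$ and $y$ has color $\{6|1|7\}$. The edges incident to the vertex $1$ are $\{0|6|1\}$, $\{1|8|4\}$, and $\{1|4|7\}$, yielding three possible colors for $y$: namely $\{1|6|0\}$, $\{1|8|4\}$, or $\{1|4|7\}$.

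For each of these three cases I would perform a pair of local deformations on $(D,C)$ analogous to those in Figure~\ref{fig4-01}: first an upper-row-type move that pushes each intermediate $\{6|1|7\}$-crossing past $x$ so as to bring $x$ immediately next to $y$, and then a lower-row-type move that eliminates $x$ against $y$. Iterating this procedure decreases the number of $\{1|1|1\}$-crossings at each step, producing after finitely many steps a diagram that satisfies (ii), while (i) is preserved because none of the moves removes a color from ${\rm Im}(C)$ (if needed, any color can be reinserted by a Reidemeister-type move as in the proof of Lemma~\ref{lem213}).

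The step I expect to be the main obstacle is verifying that none of these deformations reintroduces a $\{6|6|6\}$-crossing. Fortunately this is automatic: every new crossing produced by a move involves a strand of the $1$-arc issuing from $x$ or from $y$, so its color triple always contains an entry equal to $1$ and therefore cannot equal $\{6|6|6\}$. A secondary nuisance is purely combinatorial: because vertex $1$ has degree $3$ in the palette graph while vertex $6$ has degree only $2$, three subcases for $y$ must be diagrammed and checked (as opposed to two in Lemma~\ref{lem41}), and each demands its own local picture.
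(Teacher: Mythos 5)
Your proposal is correct and follows essentially the same strategy as the paper: starting from a diagram satisfying Lemma~\ref{lem41}, walk from a $\{1|1|1\}$-crossing $x$ to the first non-trivial crossing $y$, classify the color of $y$ via the palette graph $G(\{0,1,4,6,7,8\})$, and cancel $x$ against $y$ by local deformations, iterating until no $\{1|1|1\}$-crossing remains. The only (harmless) organizational difference is that the paper takes $y$ to be the first non-trivial crossing of \emph{either} type, so that the unique over-crossing color $\{6|1|7\}$ simply becomes a fourth case for $y$ and no preliminary stage of pushing intermediate $\{6|1|7\}$-crossings past $x$ is required.
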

%%%%%%%%%%%%%%

\begin{proof}
We may assume that $(D,C)$ satisfies Lemma~\ref{lem41}. 
Assume that $(D,C)$ has a crossing of color $\{1|1|1\}$, 
say $x$. 
Walking along the diagram from $x$, 
let $y$ be the first non-trivial crossing. 
If there are crossing of $\{1|1|1\}$ between $x$ and $y$, 
then we replace the original $x$ with the nearest one to $y$.

In the palette graph $G(\{0,1,4,6,7,8\})$, 
there are exactly three edges incident to the vertex $1$ 
whose labels are $4$, $6$, and $8$, 
and there is only one edge whose label is $1$. 
Therefore, 
the color of the crossing $y$ is $\{1|4|7\}$, 
$\{1|6|0\}$, $\{1|8|4\}$, or $\{6|1|7\}$. 

We deform the diagram near $x$ and $y$ as shown in 
Figure~\ref{fig4-02} 
so that the number of crossings of color $\{1|1|1\}$ 
is decreased. 
By repeating this process, 
we obtain a diagram with no crossing of $\{1|1|1\}$. 
\end{proof}

%%%%%%%%%%%%%%
\begin{figure}[htb]
\begin{center}
\includegraphics[bb=0 0 351 127]{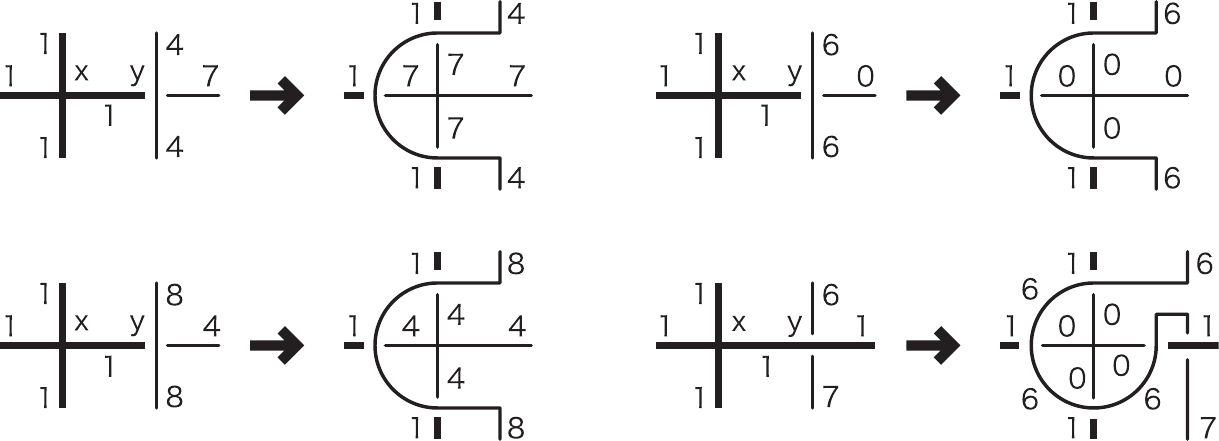}
\caption{}
\label{fig4-02}
\end{center}
\end{figure}
%%%%%%%%%%%%%%

%%%%%%%%%%%%%%
\begin{lemma}\label{lem43}
For any $11$-colorable knot $K$, 
there is a non-trivially $11$-colored diagram $(D,C)$ of $K$ 
such that 
\begin{itemize}
\item[{\rm (i)}] 
${\rm Im}(C)=\{0,1,4,6,7,8\}$, and   
\item[{\rm (ii)}] 
there is no crossing of color $\{*|1|*\}$ or $\{6|6|6\}$. 
\end{itemize}
\end{lemma}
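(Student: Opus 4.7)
The plan is to start from a diagram $(D,C)$ provided by Lemma~\ref{lem42}, and to eliminate the remaining non-trivial $\{*|1|*\}$-crossings one by one. Since Lemma~\ref{lem42} already excludes $\{1|1|1\}$-crossings, the only non-trivial $\{*|1|*\}$-crossings correspond to edges labeled $1$ in the palette graph $G(\{0,1,4,6,7,8\})$; the unique such edge is the one between vertices $6$ and $7$, so the task reduces to removing the $\{6|1|7\}$-crossings without introducing new colors and without reviving $\{1|1|1\}$- or $\{6|6|6\}$-crossings.

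I would pick a crossing $x$ of color $\{6|1|7\}$ and walk along the diagram from $x$ until meeting the first non-trivial crossing $y$; mirroring the device used in Lemmas~\ref{lem41} and~\ref{lem42}, if another $\{6|1|7\}$-crossing lies between $x$ and $y$ I first replace $x$ by the one nearest to $y$, so that no $\{6|1|7\}$-crossing lies strictly between $x$ and $y$. The color of $y$ is constrained by the strand at $x$ along which the walk travels: continuing along the over-arc (the $1$-arc) yields a crossing of color $\{1|4|7\}$, $\{1|6|0\}$, or $\{1|8|4\}$, the three edges incident to vertex $1$ in the palette graph; continuing along either under-arc (the $6$-arc or the $7$-arc) yields a similar short list of possibilities enumerated by the edges incident to vertex $6$ or $7$.

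For each configuration $(x,y)$, I would construct a local deformation analogous to those in Figure~\ref{fig4-02}, strictly decreasing the total number of $\{6|1|7\}$-crossings while keeping ${\rm Im}(C) = \{0,1,4,6,7,8\}$ and creating neither a new $\{1|1|1\}$- nor a new $\{6|6|6\}$-crossing. When the naive move would threaten to produce a forbidden trivial-color crossing or an out-of-palette color, I first clean the arc segment between $x$ and $y$ with an auxiliary move in the spirit of the upper-row reductions of Figure~\ref{fig4-01}, and then apply the main reduction. Iterating drives the number of $\{6|1|7\}$-crossings to zero.

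The main obstacle I anticipate is precisely this case analysis together with the verification that each chosen move respects all three constraints simultaneously: remaining in the palette $\{0,1,4,6,7,8\}$, not reviving $\{1|1|1\}$, and not reviving $\{6|6|6\}$. The palette graph and the menu of admissible local moves are essentially the same as those used in Lemmas~\ref{lem41} and~\ref{lem42}, so I expect each case to be resolved by a short Reidemeister sequence with an appropriate choice of over/under routing; the delicate point is the bookkeeping of which colors appear on the newly created arcs inside the replaced tangle.
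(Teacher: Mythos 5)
You correctly reduce the problem to eliminating the crossings of color $\{6|1|7\}$, but the mechanism you propose --- walking from such a crossing to the first non-trivial crossing $y$ and merging, as in Lemmas~\ref{lem41} and~\ref{lem42} --- is the wrong tool here, and the step you defer (``construct a local deformation analogous to those in Figure~\ref{fig4-02}'') is exactly where the whole difficulty lies. The walk-and-merge device works for \emph{trivial} crossings $\{a|a|a\}$ because such a crossing carries no coloring information and can be slid along the strand until it is absorbed into an adjacent non-trivial crossing. A crossing of color $\{6|1|7\}$ is non-trivial: the under-strand genuinely changes color from $6$ to $7$ there, and removing it forces you to reroute that strand along an alternative path from $6$ to $7$ in the palette graph $G(\{0,1,4,6,7,8\})$ that avoids the unique edge labeled $1$. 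Every such path requires over-arcs of colors (in particular $4$) that have no reason to be present near whichever non-trivial crossing your walk happens to meet first, so there is no finite list of local $(x,y)$-configurations that resolves all cases.

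The paper's proof instead imports the needed over-arc globally: it pulls a $4$-arc across the diagram to each $\{6|1|7\}$ crossing and performs a single local move there (Figure~\ref{fig4-03}). The hypothesis ``no crossing of color $\{6|6|6\}$'' inherited from Lemma~\ref{lem42} is not merely a constraint to be preserved, as you treat it, but the enabling ingredient: it guarantees that the $6$-arcs form a disjoint union of embedded intervals whose complement in the plane is connected, so the $4$-arc can be routed so as never to pass over a $6$-arc (which would create the out-of-palette color $2\cdot 4-6=2$), while passing over an $a$-arc for $a\in\{0,1,4,7,8\}$ only creates crossings of color $\{a|4|8-a\}$, all of which stay inside $\{0,1,4,6,7,8\}$. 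This arc-pulling idea, and the role of the $\{6|6|6\}$-condition in making the routing possible, are the missing content of your proposal.
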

%%%%%%%%%%%%%%

\begin{proof}
We may assume that $(D,C)$ satisfies Lemma~\ref{lem42}. 
Assume that $(D,C)$ has a crossing of color $\{*|1|*\}$. 
Since there is only one edge labeled $1$ 
in the palette graph $G(\{0,1,4,6,7,8\})$, 
the color of the corresponding crossing is $\{6|1|7\}$. 

There is a $4$-arc in $(D,C)$. 
We will pull the $4$-arc toward each crossing of $\{6|1|7\}$. 
In the process, we can assume that 
the $4$-arc crosses over several arcs whose colors are $0,1,4, 7,8$ 
missing $6$. 
In fact, since there is no crossing of $\{6|6|6\}$, 
the set of $6$-arcs is a disjoint union of intervals in the plane, 
and the complement in the plane is connected. 
When the $4$-arc crosses over an $a$-arc for $a=0,1,4,7,8$, 
we have a pair of new crossings of color 
$$\{a|4|8-a\}=\{0|4|8\}, \ \{1|4|7\}, \ \{4|4|4\}, \ \{7|4|1\}, \ \{8|4|0\},$$
respectively. 
See the left of Figure~\ref{fig4-03}. 
We remark that any vertex of the palette graph $G(\{0,1,4,6,7,8\})$ 
other than $6$ is $4$ itself or incident to an edge labeled $4$.

%%%%%%%%%%%%%%
\begin{figure}[htb]
\begin{center}
\includegraphics[bb=0 0 343 144]{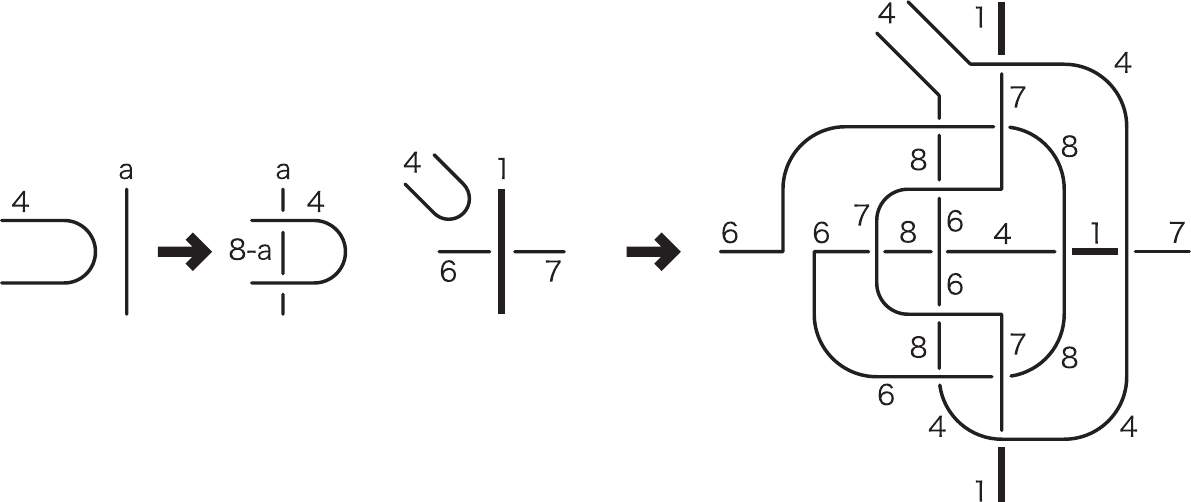}
\caption{}
\label{fig4-03}
\end{center}
\end{figure}
%%%%%%%%%%%%%%

By deforming the diagram near every crossing of $\{6|1|7\}$ 
with a $4$-arc as shown in the right of the figure, 
we obtain a diagram with no crossing of $\{6|1|7\}$. 
Then the arcs in the obtained diagram are colored by $0,1,4,6,7,8$ and 
there is no crossing of $\{*|1|*\}$ or $\{6|6|6\}$. 
\end{proof}

%%%%%%%%%%%%%%
\begin{lemma}\label{lem44}
For any $11$-colorable knot $K$, 
there is an $11$-colored diagram $(D,C)$ of $K$ 
such that 
\begin{itemize}
\item[{\rm (i)}] 
${\rm Im}(C)=\{0,3,4,6,7,8,10\}$,  
\item[{\rm (ii)}] 
there is no crossing of color $\{6|6|6\}$, and 
\item[{\rm (iii)}] 
if $\{a|b|c\}$ is the color of a crossing 
and at least one of $a,b,c$ is $3$ or $10$, 
then it is one of 
$$\{0|3|6\}, \ \{0|7|3\}, \ \{3|0|8\}, \ 
\{4|7|10\}, \ \{7|3|10\}, \ \{3|3|3\}.$$
\end{itemize}
\end{lemma}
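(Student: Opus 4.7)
The plan is to start from the diagram $(D,C)$ furnished by Lemma~\ref{lem43}, which satisfies $\mathrm{Im}(C)=\{0,1,4,6,7,8\}$ and contains no crossing of color $\{*|1|*\}$ or $\{6|6|6\}$. Because $1$ never appears as an over-arc in $(D,C)$, every $1$-arc is an embedded arc in the plane whose two endpoints lie at under-crossings of one of the three types $\{1|6|0\}$, $\{1|8|4\}$, or $\{1|4|7\}$; these correspond to the three edges incident to the vertex $1$ in the palette graph $G(\{0,1,4,6,7,8\})$. The task is to eliminate every $1$-arc while introducing only arcs of colors $3$ and $10$ and crossings on the list of condition~(iii).

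The key building blocks are three Reidemeister~II moves, each of which manufactures a $3$- or $10$-arc and creates only crossings permitted by (iii): pushing a $7$-arc over a $0$-arc yields a $3$-arc flanked by two $\{0|7|3\}$ crossings; pushing a $0$-arc over an $8$-arc yields a $3$-arc flanked by two $\{3|0|8\}$ crossings; and pushing a $7$-arc over a $4$-arc yields a $10$-arc flanked by two $\{4|7|10\}$ crossings. Once a $3$-arc exists, the only further R2 moves in our repertoire that avoid creating a color in $\{2,5,9\}$ push it over arcs of color $0$, $6$, $7$, or $10$, yielding only $\{0|3|6\}$ and $\{7|3|10\}$ crossings, which are also on the list. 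This palette-level computation is precisely what singles out the six crossings in (iii): they are the only ones that can be produced from $\{0,1,4,6,7,8\}$ by moves of the above type without ever touching a color in $\{2,5,9\}$.

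Using these building blocks, each $1$-arc $\gamma$ is eliminated by a local deformation of $(D,C)$ in a neighborhood of $\gamma$ together with its two endpoints. At an endpoint of type $\{1|6|0\}$ we apply the first R2 move to the adjacent $0$-arc followed by an R3 move against the over $6$-arc, effectively converting the original crossing into a $\{0|3|6\}$ crossing and replacing the end of $\gamma$ by a $3$-arc; endpoints of type $\{1|8|4\}$ and $\{1|4|7\}$ are handled analogously by the second and third R2 moves to produce $\{3|0|8\}$ and $\{4|7|10\}$ crossings, respectively. When the two ends of $\gamma$ yield a $3$-arc and a $10$-arc, they are spliced along the track of $\gamma$ by a $\{7|3|10\}$ crossing. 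Since the interior of $\gamma$ meets no other crossings of $D$, the local modifications at distinct $1$-arcs do not interact. A case analysis over the six endpoint-pair types for $\gamma$ (up to swapping the two ends) then verifies that the resulting diagram uses only colors in $\{0,3,4,6,7,8,10\}$ and only crossings on the allowed list, and that no $\{6|6|6\}$ crossing is created. The principal difficulty is the bookkeeping for these six cases and the explicit verification by figures that each local gadget, including the splice between the two ends, avoids every forbidden crossing and every forbidden color.
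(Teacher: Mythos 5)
Your overall strategy is the same as the paper's: start from the diagram of Lemma~\ref{lem43}, observe that the $1$-arcs are embedded with endpoints at under-crossings of the three types $\{1|6|0\}$, $\{1|8|4\}$, $\{1|4|7\}$, and eliminate them using auxiliary $0$- and $7$-arcs while a palette computation controls which new crossings can appear. However, the concrete gadgets you describe do not work. The central problem is your claim that an endpoint of type $\{1|6|0\}$ can be treated by ``replacing the end of $\gamma$ by a $3$-arc.'' A strand colored $1$ can change color only by passing under an over-arc colored $c$, after which it is colored $2c-1$; to reach $3$ one needs $c=2$, a forbidden color. So the end of a $1$-arc can become $10$ (under a $0$-arc), $0$, $7$, $4$, or $8$, but never $3$. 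The color $3$ can enter the diagram only as a recoloring of a $0$- or $8$-strand (passing under $7$ or under $0$), or as an over-arc; it cannot be grafted onto $\gamma$ itself. Relatedly, an R3 move does not change which strand is over at a crossing, so the original $\{1|6|0\}$ crossing cannot be ``converted into'' a $\{0|3|6\}$ crossing; and sliding a $7$-over-arc past a $\{1|6|0\}$ crossing forces it over the $1$-strand, producing the forbidden color $2\cdot 7-1=2$. The proposed splice is also ill-typed: at a crossing of color $\{7|3|10\}$ the over-arc is $3$ and the under-strand changes from $7$ to $10$, so this crossing joins a $7$-end to a $10$-end, not a $3$-end to a $10$-end; a crossing joining $3$ to $10$ would need over-color $c$ with $2c\equiv 13\equiv 2$, i.e.\ $c=1$, which is excluded.

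Because of this, the route through the six endpoint-pair types cannot be completed as stated. The paper's proof instead proceeds in stages that respect the constraint above: it first converts every $\{1|8|4\}$ endpoint to $\{1|4|7\}$, then uses a $0$-arc to convert every $\{1|4|7\}$ endpoint to $\{1|0|10\}$ (this is where $10$ is born, as $2\cdot 0-1$), so that all $1$-arcs have endpoints of types $\{1|6|0\}$ and $\{1|0|10\}$ only; it then removes the $1$-arcs by explicit local deformations (Figure~\ref{fig4-07}), in which $3$ appears only on $0$- and $8$-strands. Two further points your proposal glosses over: pulling the auxiliary $0$- and $7$-arcs to the sites of the gadgets requires crossing over everything in the way, and one needs the absence of $\{6|6|6\}$ and $\{1|1|1\}$ crossings to guarantee that the obstructing $6$- and $1$-arcs form disjoint embedded intervals whose complement is connected; and condition (i) demands that \emph{both} $3$ and $10$ actually occur in the final image, which requires the extra step at the end of the paper's proof creating a pair of $\{0|7|3\}$ crossings when only case (ii) has occurred.
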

%%%%%%%%%%%%%%

\begin{proof}
We may assume that $(D,C)$ satisfies Lemma~\ref{lem43}. 
Since there are three edges incident to the vertex $1$ 
in the palette graph $G(\{0,1,4,6,7,8\})$, 
every crossing with a $1$-arc is of color 
$\{1|4|7\}$, $\{1|6|0\}$, or $\{1|8|4\}$. 
If there is a crossing of $\{1|8|4\}$, 
we deform the $4$-arc near the crossing 
as shown in Figure~\ref{fig4-04} 
to replace the crossing with the one of color $\{1|4|7\}$. 
Therefore, 
we may assume that there is no crossing of $\{1|8|4\}$.

%%%%%%%%%%%%%%
\begin{figure}[htb]
\begin{center}
\includegraphics[bb=0 0 162 55]{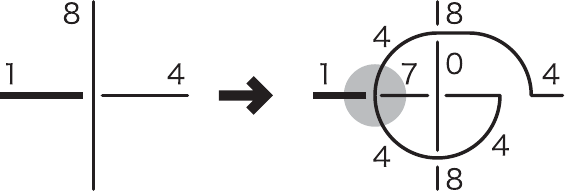}
\caption{}
\label{fig4-04}
\end{center}
\end{figure}
%%%%%%%%%%%%%%

There is a $0$-arc in $(D,C)$. 
We will pull the $0$-arc toward each crossing of $\{1|4|7\}$. 
In the process, 
we can assume that the $0$-arc crosses 
over several $a$-arcs 
for $a\in\{0,1,4,7,8\}$ missing $6$ 
by the same reason in the proof of Lemma~\ref{lem43}; 
that is, there is no crossing of $\{6|6|6\}$. 
When the $0$-arc crosses over an $a$-arc, 
we have a pair of new crossings of color 
$$\{a|0|-a\}=\{0|0|0\},\ \{1|0|10\}, \ \{4|0|7\}, \ \{7|0|4\}, \ \{8|0|3\},$$ 
respectively. 
We remark that 
the new colors $3$ and $10$ appear 
at the crossings of $\{1|0|10\}$ and $\{3|0|8\}$. 
See Figure~\ref{fig4-05}.

%%%%%%%%%%%%%%
\begin{figure}[htb]
\begin{center}
\includegraphics[bb=0 0 244 45]{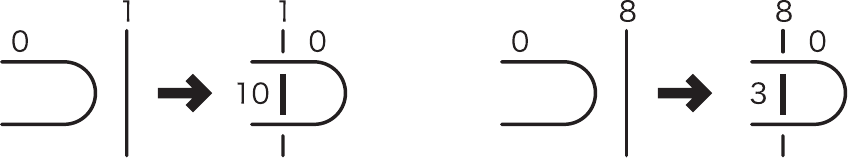}
\caption{}
\label{fig4-05}
\end{center}
\end{figure}
%%%%%%%%%%%%%%

By deforming the diagram near every crossing of $\{1|4|7\}$ 
with a $0$-arc as shown in Figure~\ref{fig4-06}, 
we remove all the crossings of $\{1|4|7\}$ and 
produce the color $10$ at the crossings of 
$\{1|0|10\}$ and $\{4|7|10\}$.

%%%%%%%%%%%%%%
\begin{figure}[htb]
\begin{center}
\includegraphics[bb=0 0 173 57]{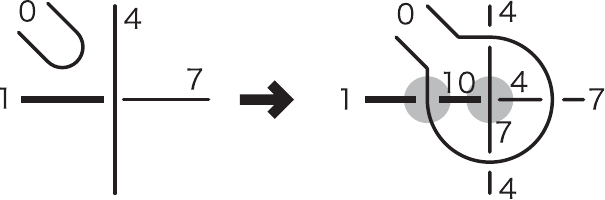}
\caption{}
\label{fig4-06}
\end{center}
\end{figure}
%%%%%%%%%%%%%%

There is a $7$-arc in $(D,C)$. 
We will pull the $7$-arc toward each $0$-arc. 
In the process, 
we can assume that the $7$-arc crosses over several $a$-arcs 
for $a\in\{0,3,4,6,7,8,10\}$ missing $1$; 
for there is no crossing of $\{1|1|1\}$. 
Then we have a pair of new crossings of 
$$\{a|7|3-a\}=\{0|7|3\},\ \{3|7|0\}, \ \{4|7|10\}, \ \{6|7|8\}, \ \{7|7|7\}, \ \{8|7|6\}, \ \{10|7|4\},$$ 
respectively. 
We remark that the colors $3$ and $10$ appear 
at the crossings of $\{0|7|3\}$ and $\{4|7|10\}$.

Now, every crossing with a $1$-arc is of color $\{1|6|0\}$ or $\{1|0|10\}$. 
The endpoints of every $1$-arc are under-crossings of color 
\begin{itemize}
\item[(i)] 
$\{1|6|0\}$ both, 
\item[(ii)] 
$\{1|0|10\}$ both, or 
\item[(iii)] 
$\{1|0|10\}$ and $\{1|6|0\}$.
\end{itemize}
For every $1$-arc of type (i), 
we deform the diagram near the $1$-arc equipped with a $7$-arc 
into type (ii) 
as shown in the left of Figure~\ref{fig4-07}. 
Here, the colors $3$ and $10$ appear 
at the crossings of $\{0|7|3\}$, $\{7|3|10\}$, 
$\{0|3|6\}$, $\{3|3|3\}$, and $\{3|0|8\}$.

%%%%%%%%%%%%%%
\begin{figure}[htb]
\begin{center}
\includegraphics[bb=0 0 335 179]{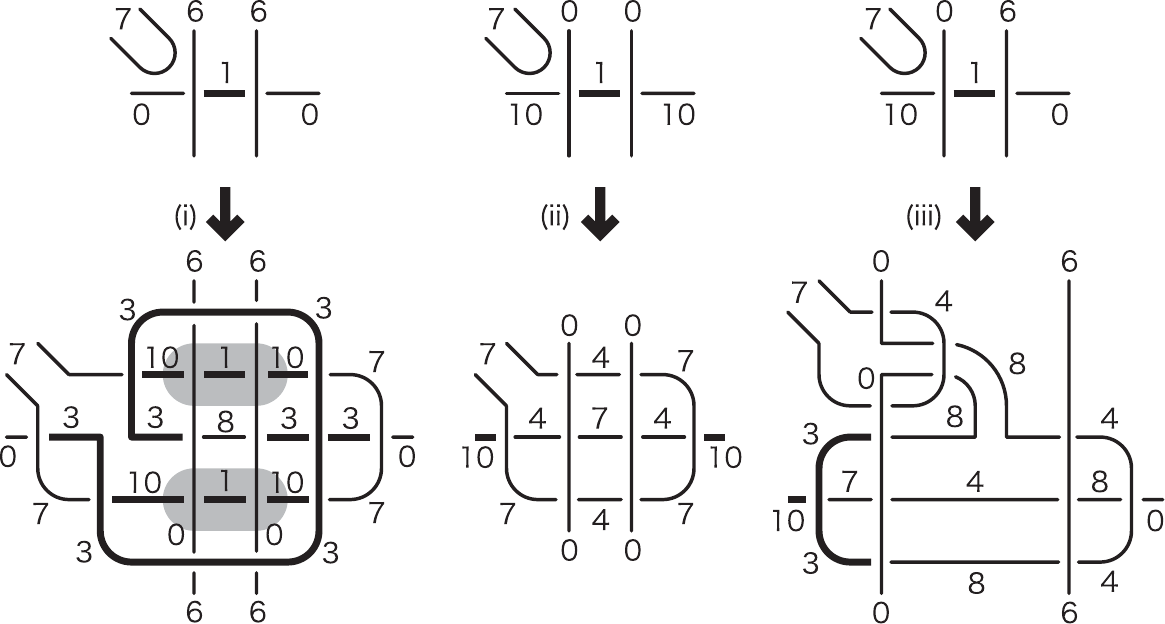}
\caption{}
\label{fig4-07}
\end{center}
\end{figure}
%%%%%%%%%%%%%%

For every $1$-arc of type (ii) or (iii), 
we deform the diagram near the $1$-arc with a $7$-arc 
as shown in the center and right of the figure, 
so that we can remove all the $1$-arcs from the diagram. 
We remark that the colors $3$ and $10$ appear at the crossings of 
$\{4|7|10\}$ for (ii) and 
$\{3|0|8\}$ and $\{7|3|10\}$ for (iii). 

Since the original diagram has a $1$-arc, 
at least one of deformations (i), (ii), and (iii) must happen. 
Therefore, the obtained diagram has a $10$-arc. 
If the diagram has no $3$-arc, 
the case (ii) must happen. 
By deforming a neighborhood of a crossing of $\{4|0|7\}$ 
similarly to Figures~\ref{fig2-04} and \ref{fig2-05}, 
we can make a pair of crossings of $\{0|7|3\}$ 
so that we have 
${\rm Im}(C)=\{0,3,4,6,7,8,10\}$. 
\end{proof}

We remark that the $11$-colored diagram $(D,C)$ in Lemma~\ref{lem44} 
has no crossing of color $\{3|10|6\}$, $\{6|8|10\}$, or $\{10|10|10\}$. 
In particular, there is no crossing whose over-arc is colored $10$.

%%%%%%%%%%%%%%%%%%%%%%%%%
%%%%%%%%%%%%%%%%%%%%%%%%%
%%%%Section 5
%%%%%%%%%%%%%%%%%%%%%%%%%
%%%%%%%%%%%%%%%%%%%%%%%%%

\section{Proof of Theorem~\ref{thm11}{\rm (ii)}--Part II}\label{sec5}

Let $G_1$ be the graph obtained from the palette graph 
$G(\{0,4,6,7,8\})$ by adding two vertices $3$ and $10$ and 
five edges 
$$\{0|3|6\}, \ \{0|7|3\}, \ \{3|0|8\}, \ 
\{4|7|10\}, \ \{7|3|10\}.$$
See Figure~\ref{fig5-01}. 
In other words, $G_1$ is obtained from 
$G(\{0,3,4,5,6,8,10\})$ 
by deleting the edges $\{3|10|6\}$ and $\{6|8|10\}$.

%%%%%%%%%%%%%%
\begin{figure}[htb]
\begin{center}
\includegraphics[bb=0 0 123 77]{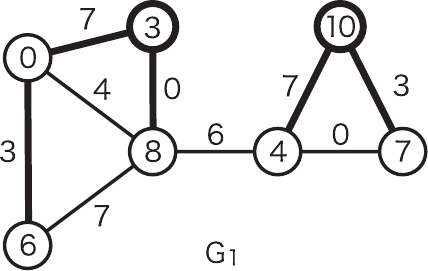}
\caption{}
\label{fig5-01}
\end{center}
\end{figure}
%%%%%%%%%%%%%%

Assume that $(D,C)$ satisfies Lemma~\ref{lem44}. 
If $\{a|c|b\}$ is the non-trivial color of a crossing of $(D,C)$, 
then the palette graph $G_1$ has the corresponding edge $\{a|c|b\}$.

%%%%%%%%%%%%%%
\begin{lemma}\label{lem51}
For any $11$-colorable knot $K$, 
there is an $11$-colored diagram $(D,C)$ of $K$ 
such that 
\begin{itemize}
\item[{\rm (i)}] 
${\rm Im}(C)=\{0,3,4,6,7,8\}$, and 
\item[{\rm (ii)}] 
there is no crossing of color $\{6|6|6\}$. 
\end{itemize}
\end{lemma}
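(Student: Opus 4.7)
The plan is to begin with a diagram $(D,C)$ satisfying Lemma~\ref{lem44} and remove every $10$-arc by local deformations, keeping ${\rm Im}(C)\subset\{0,3,4,6,7,8\}$ throughout and creating no crossing of color $\{6|6|6\}$.

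By the remark at the end of Section~\ref{sec4}, no $10$-arc of $(D,C)$ is an over-arc. The only edges incident to the vertex $10$ in the palette graph $G_1$ are labeled $7$ and $3$, corresponding to crossings of color $\{4|7|10\}$ and $\{7|3|10\}$. Hence every $10$-arc $\alpha$ is an embedded interval of $D$ meeting the rest of the diagram only at its two under-crossing endpoints, and the pair of endpoint colors of $\alpha$ falls into exactly one of three cases:
\begin{itemize}
\item[(a)] both endpoints have color $\{4|7|10\}$;
\item[(b)] both endpoints have color $\{7|3|10\}$;
\item[(c)] one endpoint of each type.
\end{itemize}

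For each case I would prescribe a local deformation, in the spirit of Figure~\ref{fig3-03}, supported in a neighborhood of $\alpha$ together with its two endpoint crossings, that eliminates $\alpha$ while introducing only new crossings whose colors lie in $G(\{0,3,4,6,7,8\})$. The archetype is case (c): one pushes the $3$-arc (over-arc at the $\{7|3|10\}$ endpoint) and the $7$-arc (over-arc at the $\{4|7|10\}$ endpoint) through a sequence of local Reidemeister moves so that $\alpha$ is cancelled by an R2, the new crossings produced being of colors drawn from $\{0|7|3\}$, $\{0|3|6\}$, $\{3|0|8\}$, and $\{3|3|3\}$, all valid edges of $G(\{0,3,4,6,7,8\})$. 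Cases (a) and (b) are reduced to (c) by a preparatory local move at one endpoint of $\alpha$, mirroring the reduction of cases (ii) and (iii) to (i) in the proof of Theorem~\ref{thm11}(i). Each application strictly decreases the number of $10$-arcs, so finitely many iterations remove them all. At the end, the six colors $0,3,4,6,7,8$ are all still present because each already appears in the starting diagram and none of the moves empties a color class; the diagram remains free of $\{6|6|6\}$ crossings because none of the newly created crossings has that color.

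The main obstacle is the sharp restriction on moves of a $3$-arc: in $G(\{0,3,4,6,7,8\})$ the vertex $3$ has only the edges $\{0|3|6\}$ and the loop $\{3|3|3\}$, so a $3$-arc cannot pass over a $4$-, $7$-, or $8$-arc without producing a crossing of inadmissible color. This forces the prescribed deformations to be strictly local rather than long-range isotopies, and the viability of each one must be verified by a direct examination of the local patch around the two endpoint crossings of $\alpha$, leveraging the strong structural control granted by Lemma~\ref{lem44}(iii). Checking that the available local moves in each of the three cases simultaneously avoid producing a new $10$-arc, a $\{6|6|6\}$ crossing, or any color outside $\{0,3,4,6,7,8\}$ is where the bulk of the proof lies.
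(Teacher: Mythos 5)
Your overall strategy---start from Lemma~\ref{lem44} and kill the $10$-arcs case by case---aims at the right target, but the claim that case (c) can be handled by a \emph{strictly local} move using only the $3$- and $7$-over-arcs already present at the two endpoint crossings cannot work, and your list of allowed new colors exposes the problem. A $10$-arc of type (c) sits on an under-strand reading $\cdots 4 \to (\mbox{under }7) \to 10 \to (\mbox{under }3) \to 7 \cdots$. After the $10$-arc is eliminated, that strand must still connect an arc colored $4$ to an arc colored $7$ through under-crossings whose colors are edges of $G(\{0,3,4,6,7,8\})$ (or at least avoid creating new $10$-arcs). In that graph the vertex $7$ has degree one: its unique incident edge is $\{4|0|7\}$. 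Hence a crossing of color $\{4|0|7\}$---that is, a $0$-arc passing over this strand---is unavoidable. No $0$-arc is present at either endpoint crossing (the over-arcs there are colored $7$ and $3$), so the elimination cannot be supported in a neighborhood of $\alpha$ and its endpoints; your proposed palette of new colors $\{0|7|3\}$, $\{0|3|6\}$, $\{3|0|8\}$, $\{3|3|3\}$ contains no edge incident to the vertex $4$ or to the vertex $7$ and therefore cannot reconnect the strand. (Sliding the $3$-arc toward the $\{4|7|10\}$ end fails for the reason you yourself note: passing it over a $7$-arc creates $\{7|3|10\}$, a new $10$, and over a $4$-arc it creates the color $2$.)

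The paper resolves exactly this by importing a $0$-arc from far away, and its case reduction goes in the opposite direction from yours. It first converts every $\{7|3|10\}$ crossing into a $\{4|7|10\}$ crossing by a local move on the $7$-arc (producing crossings of $\{0|7|3\}$ and $\{4|0|7\}$), so that all $10$-arcs become your type (a). It then shows a $0$-arc can be pulled across the whole diagram to each $10$-arc: crossing over an $a$-arc for $a\in\{0,3,4,7,8\}$ creates only the admissible colors $\{a|0|-a\}$, and the $0$-arc can avoid the $6$- and $10$-arcs because, in the absence of crossings of colors $\{3|10|6\}$, $\{6|8|10\}$, $\{6|6|6\}$, $\{10|10|10\}$, their union is a disjoint union of intervals whose complement in the plane is connected. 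The transported $0$-arc then supplies the forced $\{4|0|7\}$ crossings when each $10$-arc is removed. To repair your argument you would need both missing ingredients: the preliminary normalization of endpoint colors to $\{4|7|10\}$, and the global transport of a $0$-arc together with the connectivity justification for why it can reach every $10$-arc.
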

%%%%%%%%%%%%%%

\begin{proof}
We may assume that $(D,C)$ satisfies Lemma~\ref{lem44}. 
Since the graph $G_1$ has no edge whose label is $10$ 
and $(D,C)$ has no crossing of $\{10|10|10\}$, 
we see that there is no crossing of color $\{*|10|*\}$. 

Since there are two edges incident to the vertex $10$ in $G_1$, 
every crossing with a $10$-arc is of color 
$\{4|7|10\}$ or $\{7|3|10\}$. 
If there is a crossing of $\{7|3|10\}$, 
we deform the $7$-arc near the crossing as shown in the left of Figure~\ref{fig5-02} 
to replace the crossing with one of $\{4|7|10\}$. 
We remark that the crossings of $\{0|7|3\}$ and $\{4|0|7\}$ are also produced. 
Therefore, we may assume that there is no crossing of $\{7|3|10\}$.

%%%%%%%%%%%%%%
\begin{figure}[htb]
\begin{center}
\includegraphics[bb=0 0 352 62]{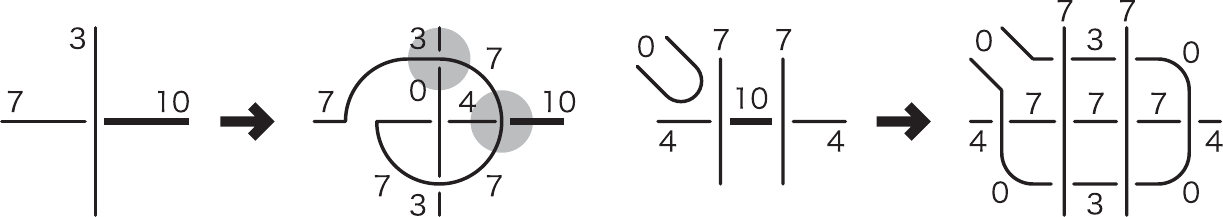}
\caption{}
\label{fig5-02}
\end{center}
\end{figure}
%%%%%%%%%%%%%%

There is a $0$-arc in $(D,C)$. 
We will pull the $0$-arc toward each $10$-arc. 
In the process, we can assume that the $0$-arc crosses over several arcs 
whose colors are $0,3,4,7,8$ missing $6$ and $10$. 
In fact, since there is no crossing of color 
$$\{3|10|6\}, \ \{2|6|10\}, \ \{6|8|10\}, \ \{6|6|6\}, \ \{10|10|10\},$$
the set of $6$- and $10$-arcs is a disjoint union of intervals, 
and the complement in the plane is connected. 
When the $0$-arc crosses over an $a$-arc for $a=0,3,4,7,8$, 
we have a pair of new crossings of color 
$$\{a|0|-a\}=\{0|0|0\}, \ \{3|0|8\}, \ \{4|0|7\}, \ \{7|0|4\}, \ \{8|0|3\},$$
respectively. 
We remark that any vertex of $G_1$ other than $6$ and $10$ 
is $0$ itself or incident to an edge labeled $0$. 

We deform the diagram near every $10$-arc with a $0$-arc 
as shown in the right of the figure, 
so that we remove all the $10$-arcs from the diagram. 
We remark that the crossings of 
$\{0|7|3\}$, $\{4|0|7\}$, and $\{7|7|7\}$ are produced. 
\end{proof}

%%%%%%%%%%%%%%%%%%%%%%%%%
%%%%%%%%%%%%%%%%%%%%%%%%%
%%%%Section 6
%%%%%%%%%%%%%%%%%%%%%%%%%
%%%%%%%%%%%%%%%%%%%%%%%%%

\section{Proof of Theorem~\ref{thm11}{\rm (ii)}--Part III}\label{sec6}

%%%%%%%%%%%%%%
\begin{lemma}\label{lem61}
For any $11$-colorable knot $K$, 
there is an $11$-colored diagram $(D,C)$ of $K$ 
such that 
\begin{itemize}
\item[{\rm (i)}] 
${\rm Im}(C)=\{0,3,4,6,7,8\}$,  
\item[{\rm (ii)}] 
there is no crossing of color $\{3|3|3\}$, $\{4|4|4\}$, or $\{6|6|6\}$. 
\end{itemize}
\end{lemma}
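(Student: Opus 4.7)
The plan is to start with a diagram $(D,C)$ satisfying Lemma~\ref{lem51}, so that ${\rm Im}(C)=\{0,3,4,6,7,8\}$ and $(D,C)$ already has no crossing of color $\{6|6|6\}$. What remains is to eliminate all trivial crossings of colors $\{3|3|3\}$ and $\{4|4|4\}$ without producing new colors and without resurrecting $\{6|6|6\}$. I would do this by iterated local moves, in direct imitation of the proofs of Lemmas~\ref{lem41} and~\ref{lem42}.

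The first step is a direct computation of midpoints in $\Z/11\Z$ to determine the palette graph $G(\{0,3,4,6,7,8\})$. It has exactly the seven edges
\[
\{0|3|6\},\ \{0|4|8\},\ \{0|7|3\},\ \{3|0|8\},\ \{4|0|7\},\ \{4|6|8\},\ \{6|7|8\}.
\]
By Lemma~\ref{lem27}, every non-trivial crossing of $(D,C)$ is labeled by one of these edges. In particular, a non-trivial crossing that meets the vertex $3$ (as an endpoint or as a label) has color $\{0|7|3\}$, $\{3|0|8\}$, or $\{0|3|6\}$, while one meeting the vertex $4$ has color $\{4|0|7\}$, $\{4|6|8\}$, or $\{0|4|8\}$.

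To remove $\{3|3|3\}$ crossings, I would pick any such crossing $x$ and walk along $D$ to the first non-trivial crossing $y$; as in Lemma~\ref{lem41}, we may assume $x$ is the nearest $\{3|3|3\}$ crossing to $y$, so no other $\{3|3|3\}$ lies between them. By the enumeration above, the color of $y$ is one of the three listed for vertex $3$, and in each case I would exhibit a Reidemeister-style deformation near $x$ and $y$ of the kind pictured in Figure~\ref{fig4-02} that strictly decreases the number of $\{3|3|3\}$ crossings. The elimination of $\{4|4|4\}$ crossings is then carried out by the same recipe, with $y$ drawn from the vertex-$4$ list.

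The main obstacle is this case-by-case construction and verification. Since every new crossing produced by a local move involves only arcs of colors already present near $x$ and $y$, its color is automatically an edge of $G(\{0,3,4,6,7,8\})$ and hence stays inside the palette $\{0,3,4,6,7,8\}$. What must be checked in each of the six cases is that the move can be arranged so that it manufactures no $\{6|6|6\}$ crossing, and that the treatment of $\{4|4|4\}$ does not create fresh $\{3|3|3\}$ crossings. This is a finite, though delicate, enumeration entirely in the spirit of the earlier lemmas, and yields the required diagram after finitely many iterations.
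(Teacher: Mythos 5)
Your reduction to Lemma~\ref{lem51}, your computation of the seven edges of $G(\{0,3,4,6,7,8\})$, and your enumeration of the possible colors of the first non-trivial crossing $y$ reached from a trivial crossing are all correct. Note, though, that your strategy (walking from $x$ to the first non-trivial crossing, as in Lemmas~\ref{lem41} and~\ref{lem42}) is not the one the paper uses here: the paper instead pulls a $0$-arc up to every $3$- and $4$-arc involved in a trivial crossing --- legitimate because every vertex of $G(\{0,3,4,6,7,8\})$ other than $6$ is $0$ or lies on an edge labelled $0$, and because the $6$-arcs do not disconnect the plane once $\{6|6|6\}$ has been excluded --- and then resolves each $\{3|3|3\}$ and $\{4|4|4\}$ crossing locally against that $0$-arc (Figure~\ref{fig6-02}).

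More importantly, there is a genuine gap. The six local deformations are the entire content of the lemma, and you exhibit none of them; ``a deformation of the kind pictured in Figure~\ref{fig4-02}'' is a promissory note, since that figure is drawn for the vertices $1$ and $6$ of a different palette graph and there is no a priori reason its moves transfer. Worse, the justification you give for why no new colors can appear is false: a crossing created by pushing an $a$-arc over a $b$-arc has color $\{b|a|2a-b\}$, and the third color $2a-b$ is forced by the coloring condition rather than being ``already present near $x$ and $y$''; it need not lie in $\{0,3,4,6,7,8\}$ at all. This is precisely how the colors $3$ and $10$ are deliberately manufactured in Section~\ref{sec4}, and precisely why every pulling argument in the paper comes with a check of which arcs may be crossed over (here, the $0$-arc must avoid crossing over $6$-arcs, since $\{6|0|5\}$ would introduce the color $5$). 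So the step you describe as ``automatic'' is exactly the delicate point, and the case-by-case verification you defer --- including that the $\{4|4|4\}$ moves create no new $\{3|3|3\}$ or $\{6|6|6\}$ crossings --- is where the proof actually lives.
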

%%%%%%%%%%%%%%

\begin{proof}
We may assume that $(D,C)$ satisfies Lemma~\ref{lem51} 
with ${\rm Im}(C)=\{0,3,4,6,7,8\}$. 
Figure~\ref{fig6-01} shows the palette graph 
$G(\{0,3,4,6,7,8\})$, 
which is obtained from $G_1$ by deleting the vertex $10$ 
and its incident edges $\{4|7|10\}$ and $\{7|3|10\}$.

%%%%%%%%%%%%%%
\begin{figure}[htb]
\begin{center}
\includegraphics[bb=0 0 123 77]{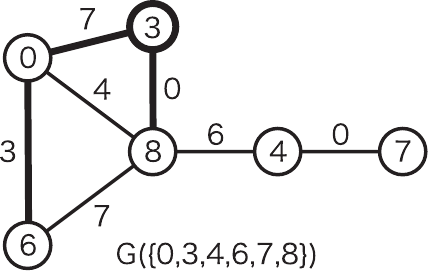}
\caption{}
\label{fig6-01}
\end{center}
\end{figure}
%%%%%%%%%%%%%%

There is a $0$-arc in $(D,C)$. 
Similarly to the proof of Lemma~\ref{lem51}, 
we can pull the $0$-arc freely 
without producing new colors. 
We remark that any vertex of $G(\{0,3,4,6,7,8\})$ 
other than $6$ is $0$ itself or 
incident to an edge labeled $0$. 
Then we deform the diagram near every $3$- or $4$-arc 
with a $0$-arc as shown in Figure~\ref{fig6-02} 
so that there is no crossing of $\{3|3|3\}$ or $\{4|4|4\}$. 
\end{proof}

%%%%%%%%%%%%%%
\begin{figure}[htb]
\begin{center}
\includegraphics[bb=0 0 343 55]{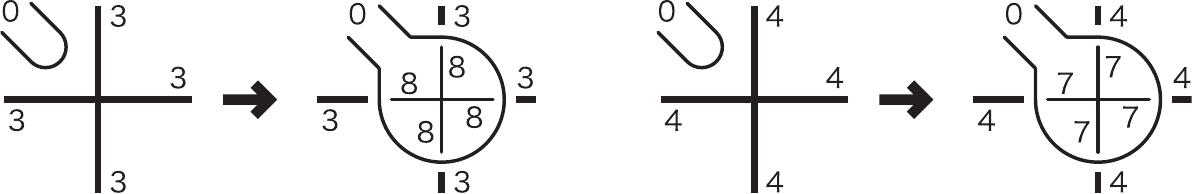}
\caption{}
\label{fig6-02}
\end{center}
\end{figure}
%%%%%%%%%%%%%%

%%%%%%%%%%%%%%
\begin{lemma}\label{lem62}
For any $11$-colorable knot $K$, 
there is an $11$-colored diagram $(D,C)$ of $K$ 
such that 
\begin{itemize}
\item[{\rm (i)}] 
${\rm Im}(C)=\{0,3,4,6,7,8\}$,  
\item[{\rm (ii)}] 
there is no crossing of color $\{*|3|*\}$, $\{4|4|4\}$, or $\{6|6|6\}$. 
\end{itemize}
\end{lemma}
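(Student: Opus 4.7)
We start from a diagram $(D,C)$ satisfying Lemma~\ref{lem61}, so ${\rm Im}(C)=\{0,3,4,6,7,8\}$ and there are no crossings of color $\{3|3|3\}$, $\{4|4|4\}$, or $\{6|6|6\}$. Inspecting the palette graph $G(\{0,3,4,6,7,8\})$ in Figure~\ref{fig6-01}, the only non-trivial edge with label $3$ is $\{0|3|6\}$. Hence the only non-trivial crossings of color $\{*|3|*\}$ that can occur in $(D,C)$ are of color $\{0|3|6\}$, and the task is to eliminate these while preserving the image and the conditions of Lemma~\ref{lem61}. Note also that the two edges incident to the vertex $3$ in the palette graph are $\{0|7|3\}$ and $\{3|0|8\}$, so every $3$-arc of $D$ is an interval whose two endpoints are under-crossings of color $\{0|7|3\}$ or $\{3|0|8\}$, possibly the over-arc of several $\{0|3|6\}$ crossings along its interior.

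The strategy mimics Lemmas~\ref{lem43} and \ref{lem51}: pull a helper $0$-arc toward each $\{0|3|6\}$ crossing and then perform a local deformation that eliminates the crossing. The $0$-arc is chosen because every vertex of $G(\{0,3,4,6,7,8\})$ other than $6$ is either $0$ itself or incident to an edge labeled $0$; consequently, pulling a $0$-arc across an $a$-arc for $a\in\{0,3,4,7,8\}$ produces a pair of crossings of color $\{a|0|-a\}\in\{\{0|0|0\},\{3|0|8\},\{4|0|7\},\{7|0|4\},\{8|0|3\}\}$, all of which lie within the color set $\{0,3,4,6,7,8\}$ and none of which are of type $\{4|4|4\}$ or $\{6|6|6\}$. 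Since the diagram has no $\{6|6|6\}$ crossing, the set of $6$-arcs is a disjoint union of intervals whose complement in the plane is connected, so the helper $0$-arc can indeed be pulled freely while avoiding every $6$-arc.

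With a helper $0$-arc placed alongside a $\{0|3|6\}$ crossing $x$, we then apply a local Reidemeister-III-type move, analogous to those used in Figures~\ref{fig4-06} and the right half of Figure~\ref{fig5-02}, to remove $x$; the new crossings produced are all of the allowed colors $\{0|0|0\}$, $\{3|0|8\}$, $\{4|0|7\}$, $\{0|7|3\}$, or similar. Iterating this procedure for each $\{0|3|6\}$ crossing finally produces a diagram satisfying (ii). If the helper-arc move is inconvenient in a particular configuration, one can instead walk along the $3$-arc from $x$ to its nearest endpoint $y$ (of color $\{0|7|3\}$ or $\{3|0|8\}$), chosen so that no other $\{0|3|6\}$ crossing lies between $x$ and $y$, and apply a local Reidemeister move near $x$ and $y$ in the spirit of the proofs of Lemmas~\ref{lem31} and \ref{lem42}; the two endpoint subcases must be treated separately.

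The main obstacle will be the case-by-case verification that the local deformation near each $\{0|3|6\}$ crossing (a) keeps the image within $\{0,3,4,6,7,8\}$, (b) introduces no new crossing whose over-arc is colored $3$, and (c) produces no $\{4|4|4\}$ or $\{6|6|6\}$ crossing. The restricted structure of the palette graph $G(\{0,3,4,6,7,8\})$, together with the absence of an edge labeled $8$ and the narrow list of edges labeled $0$, $3$, $4$, $6$, and $7$ computed above, makes this verification delicate but routine, and the forbidden colors are precisely those one must avoid when choosing how to route the helper $0$-arc through the diagram.
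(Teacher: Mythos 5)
Your overall strategy (pull a helper over-arc to each crossing of colour $\{0|3|6\}$ and resolve it locally) is the paper's strategy, but your choice of helper arc is wrong, and this is not a cosmetic difference. The paper pulls a \emph{$7$-arc}, not a $0$-arc. The reason is forced by the palette graph $G(\{0,3,4,6,7,8\})$: a crossing of colour $\{0|3|6\}$ is an under-strand transition between the colours $0$ and $6$, and once the edge labelled $3$ is removed from the graph, the only remaining path from the vertex $0$ to the vertex $6$ runs through the vertex $8$, using over-arc colours $4$ and $7$ (the edges $\{0|4|8\}$ and $\{6|7|8\}$). A $0$-coloured over-arc is useless for this re-routing: passing a $0$-arc under a $0$-over-arc keeps it coloured $0$ (the trivial crossing $\{0|0|0\}$), and passing a $6$-arc under a $0$-over-arc would create the colour $2\cdot 0-6\equiv 5$, which is not in $\{0,3,4,6,7,8\}$. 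So the local move you invoke ``analogous to Figure~\ref{fig4-06}'' does not exist for a $0$-helper: the Figure~\ref{fig4-06} move applied to $\{0|3|6\}$ requires a helper coloured $(0+3)/2\equiv 7$, and it produces the crossings $\{6|7|8\}$ and $\{3|0|8\}$, both legitimate edges of the palette graph. Your list of ``new crossings of the allowed colors $\{0|0|0\}$, $\{3|0|8\}$, $\{4|0|7\}$, $\{0|7|3\}$, or similar'' never accounts for how the $0$-to-$6$ transition is actually realised after the $3$-over-arc is gone, which is the whole content of the step.

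This also explains why you never use the hypothesis of Lemma~\ref{lem61} that there is no crossing of colour $\{4|4|4\}$, whereas the paper's proof depends on it: the $7$-arc can be pulled freely precisely because it can avoid all $4$-arcs (crossing a $7$-arc over a $4$-arc would create the forbidden colour $2\cdot 7-4\equiv 10$), and the absence of $\{4|4|4\}$ crossings guarantees the $4$-arcs form disjoint intervals with connected complement. When the $7$-arc crosses over an $a$-arc for $a\in\{0,3,6,7,8\}$ the new crossings are $\{a|7|3-a\}$, all within the colour set. Your fallback suggestion (walking along the $3$-arc to an endpoint of colour $\{0|7|3\}$ or $\{3|0|8\}$) is too vague to assess and in any case does not repair the main defect. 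To fix the proof, replace the $0$-helper by a $7$-helper, justify its free movement via the no-$\{4|4|4\}$ condition, and verify the resolution of $\{0|3|6\}$ produces only $\{6|7|8\}$ and $\{3|0|8\}$.
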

%%%%%%%%%%%%%%

\begin{proof}
We may assume that $(D,C)$ satisfies Lemma~\ref{lem61}. 
In the palette graph $G(\{0,3,4,6,7,8\})$, 
there is only one edge whose label is $3$. 
Therefore, 
every crossing whose over-arc is $3$ 
has the color $\{0|3|6\}$. 

There is a $7$-arc in $(D,C)$. 
We will pull the $7$-arc toward each crossing of $\{0|3|6\}$. 
Since there is no crossing of $\{4|4|4\}$, 
we can assume that the $7$-arc 
crosses over several arcs whose colors are 
$0,3,6,7,8$ missing $4$. 
If the $7$-arc crosses an $a$-arc for $a\in\{0,3,6,7,8\}$, 
then we have a pair of new crossings of color 
$$\{a|7|3-a\}=\{0|7|3\}, \ \{3|7|0\}, \ \{6|7|8\}, \ \{7|7|7\}, \ \{8|7|6\},$$
respectively. 
We remark that any vertex of $G(\{0,3,4,6,7,8\})$ other than $4$ 
is $7$ itself or incident to an edge labeled $7$. 
We deform the diagram near every crossing of $\{0|3|6\}$ 
equipped with a $7$-arc 
as shown in Figure~\ref{fig6-03} 
to remove all the crossings of $\{0|3|6\}$. 
\end{proof}

%%%%%%%%%%%%%%
\begin{figure}[htb]
\begin{center}
\includegraphics[bb=0 0 154 55]{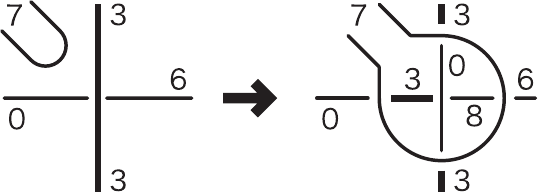}
\caption{}
\label{fig6-03}
\end{center}
\end{figure}
%%%%%%%%%%%%%%

\begin{proof}[Proof of {\rm Theorem~\ref{thm11}(ii)}] 
We may assume that $(D,C)$ satisfies Lemma~\ref{lem62}. 
Since there are two edges incident to the vertex $3$ 
in $G(\{0,3,4,6,7,8\})$, 
every crossing with a $3$-arc is of color $\{3|0|8\}$ or $\{0|7|3\}$. 
Therefore, the endpoints of every $3$-arc are under-crossings of color 
\begin{itemize}
\item[(i)] 
$\{3|0|8\}$ and $\{0|7|3\}$, 
\item[(ii)] 
$\{3|0|8\}$ both, or 
\item[(iii)] 
$\{0|7|3\}$ both. 
\end{itemize}

For every $3$-arc of type (i), 
we deform the diagram near the crossing of $\{0|7|3\}$, 
which reduces a $3$-arc of type (ii). 
See the left of Figure~\ref{fig6-04}. 
Therefore, we may assume that there is no $1$-arc of type (i).

%%%%%%%%%%%%%%
\begin{figure}[htb]
\begin{center}
\includegraphics[bb=0 0 307 143]{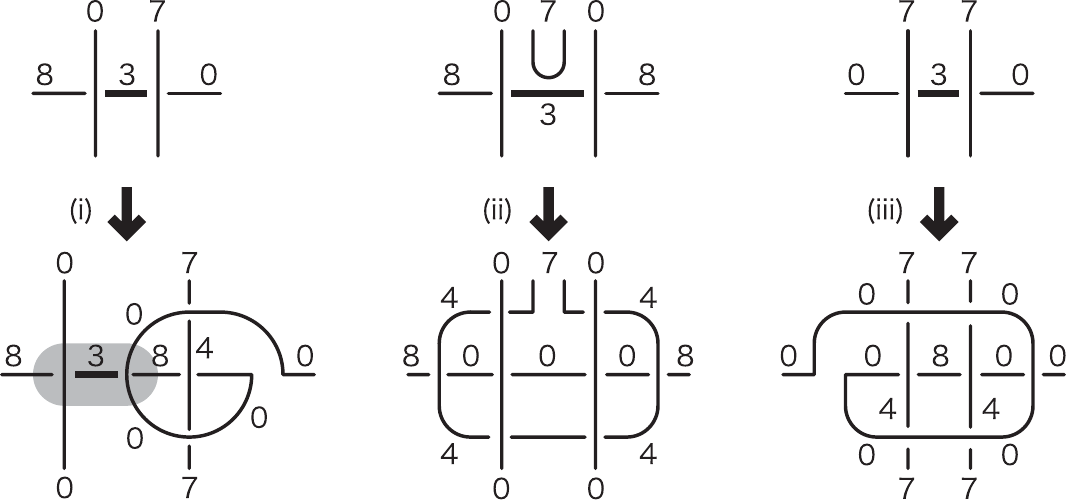}
\caption{}
\label{fig6-04}
\end{center}
\end{figure}
%%%%%%%%%%%%%%

To remove a $3$-arc of type (ii), 
We will pull a $7$-arc toward the $3$-arc. 
Since there is no crossing of $\{4|4|4\}$, 
the $7$-arc can cross over several arcs whose colors are $0,3,6,7,8$ 
missing $4$ 
similarly to the proof of Lemma~\ref{lem62}. 
We remark that when the $7$-arc crosses over an $0$- or $3$-arc, 
then we have a pair of new crossings of color 
$\{0|7|3\}$. 
We deform the diagram near every $3$-arc of type (ii) with a $7$-arc 
to remove all the $3$-arcs of type (ii). 
See the center of the figure. 

Now, since every crossing with a $3$-arc is of color $\{0|7|3\}$, 
every $3$-arc is of type (iii). 
We deform the diagram near every $3$-arc of type (iii) with a $7$-arc 
as shown in the right of the figure 
so that we obtain a diagram with no $3$-arc. 
\end{proof}

%%%%%%%%%%%%%%
\begin{corollary}\label{cor63}
For any $11$-colorable knot $K$ and $a\not\equiv b\in{\Z}/11{\Z}$, 
there is an $11$-colored diagram $(D,C)$ of $K$  
with $${\rm Im}(C)=\{a,b,5a+7b,2a+10b,10a+2b\}.$$
\end{corollary}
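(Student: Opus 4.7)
The plan is to mimic exactly the proof of Corollary~\ref{cor32}, but starting from Theorem~\ref{thm11}(ii) in place of Theorem~\ref{thm11}(i). By that theorem, for any $11$-colorable knot $K$ there is an $11$-colored diagram $(D,C_0)$ with $\mathrm{Im}(C_0)=\{0,4,6,7,8\}$. The strategy is to exhibit an affine map $f(x)=\alpha x+\beta$ with $\alpha\not\equiv 0$ that sends $\{0,4,6,7,8\}$ onto the target set $\{a,b,5a+7b,2a+10b,10a+2b\}$, and then to invoke Lemma~\ref{lem210} to promote $(D,C_0)$ to a diagram $(D,C)$ with $\mathrm{Im}(C)=f(\mathrm{Im}(C_0))$.

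First I would pin down the map. I want $f(0)=a$ and $f(4)=b$, which forces $\beta=a$ and $\alpha\equiv 4^{-1}(b-a)\equiv 3(b-a)\pmod{11}$, since $4\cdot 3\equiv 1\pmod{11}$. Thus I take $f(x)=3(b-a)x+a$. The hypothesis $a\not\equiv b\pmod{11}$ guarantees $\alpha\not\equiv 0$, so $f$ is a genuine affine automorphism of $\Z/11\Z$ in the sense required by Lemma~\ref{lem210}.

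Next I would just compute the images of the remaining three generators and match them against the target. A direct reduction modulo $11$ gives
\[
f(6)=a+18(b-a)\equiv 5a+7b,\qquad f(7)=a+21(b-a)\equiv 2a+10b,\qquad f(8)=a+24(b-a)\equiv 10a+2b.
\]
Together with $f(0)=a$ and $f(4)=b$, this shows $f(\{0,4,6,7,8\})=\{a,b,5a+7b,2a+10b,10a+2b\}$.

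There is no substantive obstacle; the only thing to double-check is that the five output values are genuinely distinct when $a\not\equiv b$, but that is automatic because $f$ is an affine bijection and the source set has five elements. Applying Lemma~\ref{lem210} to $(D,C_0)$ with this $f$ produces the desired $11$-colored diagram $(D,C)$ of $K$ with $\mathrm{Im}(C)=\{a,b,5a+7b,2a+10b,10a+2b\}$, completing the corollary.
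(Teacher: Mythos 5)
Your proposal is correct and is essentially identical to the paper's own proof: both use the affine map $f(x)=3(b-a)x+a$, verify the images of $0,4,6,7,8$, and conclude via Lemma~\ref{lem210} applied to the diagram from Theorem~\ref{thm11}(ii).
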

%%%%%%%%%%%%%%

\begin{proof}
Let $f:{\Z}/11{\Z}\rightarrow{\Z}/11{\Z}$ be the affine map  
defined by $f(x)=3(b-a)x+a$. 
Since the map $f$ satisfies  
$$f(0)=a, \ f(4)=b, \ f(6)=5a+7b, \ f(7)=2a+10b, \ f(8)=10a+2b,$$
we have the conclusion by 
Lemma~\ref{lem210} and Theorem~\ref{thm11}(ii). 
\end{proof}

%%%%%%%%%%%%%%%%%%%%%%%%%
%%%%%%%%%%%%%%%%%%%%%%%%%
%%%%Section 7
%%%%%%%%%%%%%%%%%%%%%%%%%
%%%%%%%%%%%%%%%%%%%%%%%%%

\section{$11$-colorable ribbon $2$-knot}\label{sec7}

A {\it ribbon $2$-knot} \cite{Fox} 
is a kind of knotted $2$-sphere embedded in ${\R}^4$. 
Such a $2$-knot is presented by a diagram in ${\R}^3$ 
with only double point circles \cite{Yaj}, 
the $n$-colorability is defined similarly to the classical case 
by assigning an element of ${\Z}/n{\Z}$ to each sheet of the diagram. 
Refer to \cite{CS} for a diagram of a knotted surfaces.

%%%%%%%%%%%%%%
\begin{lemma}\label{lem71} 
Let $K$ be an $11$-colorable ribbon $2$-knot. 
For each set $S=\{1,4,6,7,8\}$ or $\{0,4,6,7,8\}$, 
there is an $11$-colored diagram of $K$ which satisfies the following. 
\begin{itemize}
\item[{\rm (i)}] 
Every double point circle has a neighborhood 
as shown in {\rm Figure~\ref{fig7-01}}, 
and all the sheets of the diagram other than the small shaded disks 
are colored by $S$. 
\item[{\rm (ii)}] 
While the color $2a-b$ of the shaded disk may not belong to $S$, 
the pair $(a,b)$ must satisfy $2b-a\in S$. 
\end{itemize}
\end{lemma}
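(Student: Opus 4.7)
The plan is to bootstrap from Theorem~\ref{thm11} to ribbon $2$-knots via the virtual arc (equivalently welded) presentation of a ribbon $2$-knot. Every ribbon $2$-knot $K$ admits a broken surface diagram in $\R^3$ whose only singularities are double point circles, and Fox $11$-coloring on such a diagram obeys exactly the same rule as in the classical case: if the passing sheet has color $a$ and one side of the broken sheet has color $b$, then the other side is forced to be $2a-b$, which matches the shaded-disk color recorded in Lemma~\ref{lem71}(ii).

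The first step is to pass from $K$ to its virtual arc presentation, a planar arc diagram whose $11$-colorings correspond bijectively to those of $K$. Apply Theorem~\ref{thm11}(i) (respectively (ii)) to this presentation to obtain a coloring of the arc diagram with image exactly $S=\{1,4,6,7,8\}$ (respectively $\{0,4,6,7,8\}$). The key point is that each of the local modifications used in Sections~\ref{sec3}--\ref{sec6} is realized by a Roseman-type local move on the corresponding broken surface diagram, so the entire argument transports without change to the ribbon $2$-knot setting.

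The second step is to translate the result back to a broken surface diagram in $\R^3$. Each classical crossing becomes a double point circle whose neighborhood has the form of Figure~\ref{fig7-01}: the large sheets inherit their colors $a,b\in S$ from the virtual picture, while the Fox relation forces the small shaded disk to carry color $2a-b$, which need not lie in $S$. The complementary partner value $2b-a$ is the color of another large sheet in the same local picture and therefore must stay in $S$; a direct inspection of the palette graphs $G(\{1,4,6,7,8\})$ and $G(\{0,4,6,7,8\})$ in Figure~\ref{fig2-01} confirms that every pair $(a,b)$ arising from an edge of $G(S)$ satisfies $2b-a\in S$, yielding condition (ii).

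The main obstacle will be the bookkeeping in the last step: one must translate each local modification of Sections~\ref{sec3}--\ref{sec6} into its ribbon $2$-knot analogue, and check case-by-case that the shaded-disk color $2a-b$ together with the partner color $2b-a$ always realize the pattern of Lemma~\ref{lem71}(ii). No new combinatorial input beyond Theorem~\ref{thm11} and the structure of $G(S)$ is required; the work is purely geometric verification of the broken surface diagram produced by the translation.
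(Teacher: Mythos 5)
Your proposal is correct and takes essentially the same route as the paper: present $K$ by a virtual arc as in \cite{Sat}, rerun the argument of Theorem~\ref{thm11} on that arc diagram to obtain a coloring with image exactly $S$, and pass to the associated broken surface diagram as in \cite{Osh, Sat2}, with condition (ii) forced by the edges of the palette graph $G(S)$. The only cosmetic differences are that the paper carries out the reduction entirely at the level of the (welded) arc diagram and cites \cite{Osh, Sat2} for the translation to the surface diagram rather than appealing to Roseman-type moves, and that each classical crossing actually produces the two double point circles whose neighborhoods are shown in Figure~\ref{fig7-01}.
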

%%%%%%%%%%%%%%

\begin{proof} 
Let $A$ be a virtual arc which presents $K$ \cite{Sat}. 
Since $K$ is $11$-colorable,  
so is $A$. 
Then there is an $11$-colored diagram $(D,C)$ of $A$ 
with ${\rm Im}(C)=S$ by a similar argument 
in the proof of Theorems~\ref{thm11}. 
The diagram of $K$ associated to $(D,C)$ 
is the desired one \cite{Osh, Sat2}. 
\end{proof}

%%%%%%%%%%%%%%
\begin{figure}[htb]
\begin{center}
\includegraphics[bb=0 0 253 91]{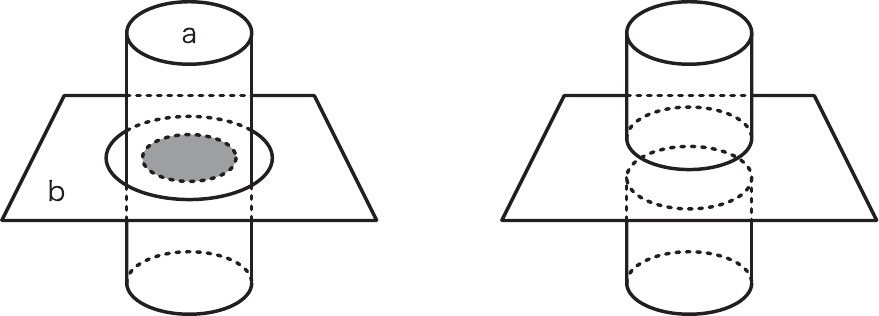}
\caption{}
\label{fig7-01}
\end{center}
\end{figure}
%%%%%%%%%%%%%%

%%%%%%%%%%%%%%
\begin{theorem}\label{thm72}
Any $11$-colorable ribbon $2$-knot satisfies the following. 
\begin{itemize}
\item[{\rm (i)}] 
There is an $11$-colored diagram $(D_1,C_1)$ of $K$  
with ${\rm Im}(C_1)=\{1,4,6,7,8\}$. 
\item[{\rm (ii)}] 
There is an $11$-colored diagram $(D_2,C_2)$ of $K$ 
with ${\rm Im}(C_2)=\{0,4,6,7,8\}$. 
\end{itemize}
\end{theorem}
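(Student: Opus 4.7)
The plan is to reduce Theorem~\ref{thm72} to Lemma~\ref{lem71} and then remove the remaining shaded disks whose color lies outside $S$. Lemma~\ref{lem71} already supplies, for each $S\in\{\{1,4,6,7,8\},\{0,4,6,7,8\}\}$, a diagram of $K$ in which every sheet apart from the shaded disks is colored by $S$, and each offending shaded disk carries a color $2a-b$ satisfying $2b-a\in S$. Thus the remaining task is entirely local: at each double point circle with $2a-b\notin S$, we must modify the diagram so that the corresponding sheet acquires a color in $S$.

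For each such circle I would perform a local ribbon surgery that replaces the shaded disk by a configuration in which the corresponding under-piece carries color $2b-a\in S$. Concretely, this will be a band-and-$1$-handle move along a nearby region of the diagram, directly parallel to the diagrammatic manipulations used in Sections~\ref{sec3}--\ref{sec6} for the classical case; one should think of each such move as the surface analogue of the local deformations used to change the color of a crossing in the proof of Theorem~\ref{thm11}. By Lemma~\ref{lem210} together with the affine symmetries recorded in Corollaries~\ref{cor32} and~\ref{cor63}, only a single representative of each orbit of pairs $(a,b)$ under the affine group of ${\Z}/11{\Z}$ needs to be treated, making the case analysis finite.

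The main obstacle I expect is verifying that each local surface move both preserves the ambient isotopy class of the ribbon $2$-knot and introduces \emph{only} colors within $S$. For ribbon $2$-knots arising from virtual arc presentations, the band slides and $1$-handle manipulations that should furnish these surface analogues are standard, but one must track carefully how every sheet in the neighbourhood of Figure~\ref{fig7-01} deforms, including any ambient sheets that the new handle inadvertently crosses. Once a canonical case is handled, the remaining ones follow by affine symmetry, and the procedure terminates because each application strictly reduces the number of shaded disks whose color lies outside $S$, yielding the required diagrams $(D_1,C_1)$ and $(D_2,C_2)$.
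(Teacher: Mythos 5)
Your overall strategy coincides with the paper's: start from Lemma~\ref{lem71}, enumerate the pairs $(a,b)$ whose shaded disk is colored $2a-b\notin S$, and remove the offending sheets by local surface deformations, using affine symmetry to cut down the case analysis. However, there is a genuine gap: the entire substance of the paper's proof is the explicit construction of those local deformations (Figures~\ref{fig7-02}--\ref{fig7-05}), and you do not supply them. You write that you ``would perform a local ribbon surgery'' analogous to the classical moves of Sections~\ref{sec3}--\ref{sec6}, and you yourself identify as ``the main obstacle'' the verification that each such move preserves the $2$-knot and introduces only colors in $S$. That verification is not routine: the surface moves are not literal translations of the classical Reidemeister manipulations, and in several cases the paper must chain reductions (e.g.\ $(a,b)=(4,7)$ is first deformed so as to land in the case $(4,1)$, and $(7,8)$ is reduced to $(7,6)$) while pulling in an auxiliary sheet of a specific color ($8$, $0$, or $7$) chosen so that the new double point circles have admissible colors. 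Without exhibiting these moves, the proof is a plan rather than an argument.

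A second, smaller issue is that your appeal to affine symmetry is overstated for case (ii). The reduction to one representative per orbit requires an affine map $f(x)=\alpha x+\beta$ that preserves $S$ \emph{setwise}; for $S=\{1,4,6,7,8\}$ the paper uses $f(x)=9x+9$, which cyclically permutes $S$ and collapses the ten pairs to the two orbits of $(4,1)$ and $(4,7)$. But for $S=\{0,4,6,7,8\}$ the affine stabilizer is trivial (comparing first and second power sums of $S$ forces $\alpha=1$, $\beta=0$ up to one candidate that fails), so no two of the six pairs $(4,8),(7,6),(7,8),(6,8),(6,4),(0,7)$ are related by an affine symmetry of $S$; each must be handled by an explicit deformation or an explicit reduction to another case, as the paper does. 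Citing Lemma~\ref{lem210} and Corollaries~\ref{cor32} and~\ref{cor63} does not repair this, since those results change the image set rather than fix it.
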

%%%%%%%%%%%%%%

\begin{proof} 
(i) 
We may assume that $(D,C)$ satisfies 
Lemma~\ref{lem71} for $S=\{1,4,6,7,8\}$. 
In the left of Figure~\ref{fig7-01}, 
the shaded disk is colored $2b-a$. 
The pair $(a,b)$ with $a,b,2b-a\in S$ and $2a-b\not\in S$ 
is one of the following: 
$$(a,b)= (4,1), \ (4,7), \ (1,7), \ (1,6), \ (7,6), \ (7,8), \
(6,8), \ (6,4), \ (8,4), \ (8,1).$$
In fact, each edge $\{x|y|z\}$ 
in the palette graph $G(S)$ 
produces such two pairs $(y,x)$ and $(y,z)$.

First, we consider the case $(a,b)=(4,1)$, 
where the shaded sheet is colored $9$. 
There is an $8$-sheet in $(D,C)$. 
We pull the $8$-sheet toward the $9$-sheet 
without introducing new double points 
and deform the diagram as shown in the left of Figure~\ref{fig7-02} 
to remove the $9$-sheet. 
We remark that the figure shows a cross-section 
of the neighborhood of the $9$-sheet. 
Next, we consider the case $(a,b)=(4,7)$, 
where the shaded sheet is colored $10$. 
We deform the horizontal $4$-sheet by surrounding the $10$-sheet, 
that reduces the case $(a,b)=(4,1)$. 
See the right of the figure.

%%%%%%%%%%%%%%
\begin{figure}[htb]
\begin{center}
\includegraphics[bb=0 0 238 175]{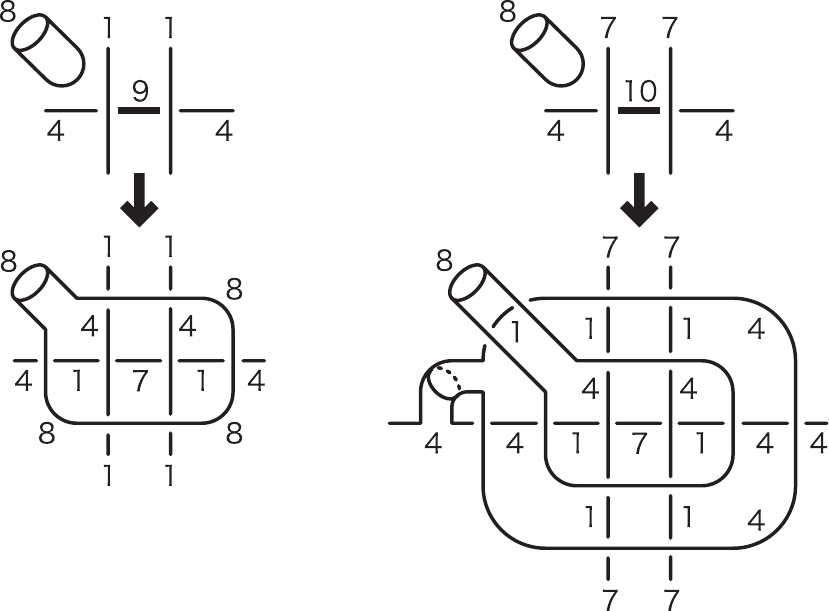}
\caption{}
\label{fig7-02}
\end{center}
\end{figure}
%%%%%%%%%%%%%%

Let $f:{\Z}/11{\Z}\rightarrow{\Z}/11{\Z}$ be the affine map 
defined by $f(x)=9x+9$. 
Since we have  
$$f(1)=7, \ f(4)=1, \ f(6)=8, \ f(7)=6, \mbox{ and }f(8)=4,$$
the cases $(a,b)=(1,7)$, $(7,6)$, $(6,8)$, and $(8,4)$ 
are obtained from $(a,b)=(4,1)$ by applying $f$ 
repeatedly, 
and the cases $(a,b)=(1,6)$, $(7,8)$, $(6,4)$, and $(8,1)$ 
are obtained from $(a,b)=(4,7)$ similarly.

(ii) 
We may assume that $(D,C)$ satisfies Lemma~\ref{lem71} 
for $S=\{0,4,6,7,8\}$. 
The pair $(a,b)$ with $a,b,2b-a\in S$ and $2a-b\not\in S$ 
is one of the following: 
$$(a,b)=(4,8), \ (7,6), \ (7,8), \ (6,8), \ (6,4), \mbox{ and }(0,7).$$
In fact, each edge $\{x|y|z\}$ in the palette graph $G(S)$ 
produces such two pairs $(y,x)$ and $(y,z)$ 
other than $(4,8)$ from $\{0|4|8\}$ and $(0,4)$ from $\{4|0|7\}$. 

For the case $(a,b)=(4,8)$, 
we deform the horizontal $4$-sheet by surrounding the shaded $1$-sheet 
as shown in the left of Figure~\ref{fig7-03} 
so that we can remove the $1$-sheet. 
The case $(a,b)=(0,7)$ can be similarly proved. 
See the right of the figure.

%%%%%%%%%%%%%%
\begin{figure}[htb]
\begin{center}
\includegraphics[bb=0 0 217 143]{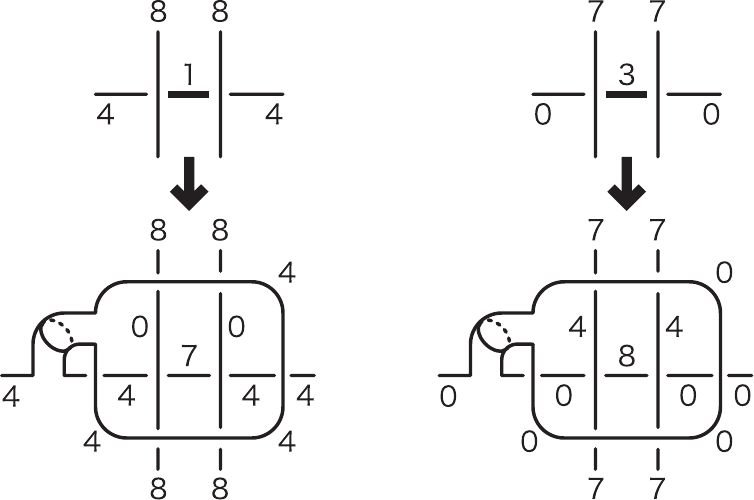}
\caption{}
\label{fig7-03}
\end{center}
\end{figure}
%%%%%%%%%%%%%%

For the case $(a,b)=(7,6)$, 
we pull a $0$-sheet and deform the diagram as shown in the left of Figure~\ref{fig7-04}. 
Then we can remove the $5$-sheet without introducing new colors. 
For the case $(a,b)=(7,8)$, 
we first deform the horizontal $7$-sheet by surrounding the shaded $9$-sheet, 
that reduces to the case $(a,b)=(7,6)$.

%%%%%%%%%%%%%%
\begin{figure}[htb]
\begin{center}
\includegraphics[bb=0 0 292 184]{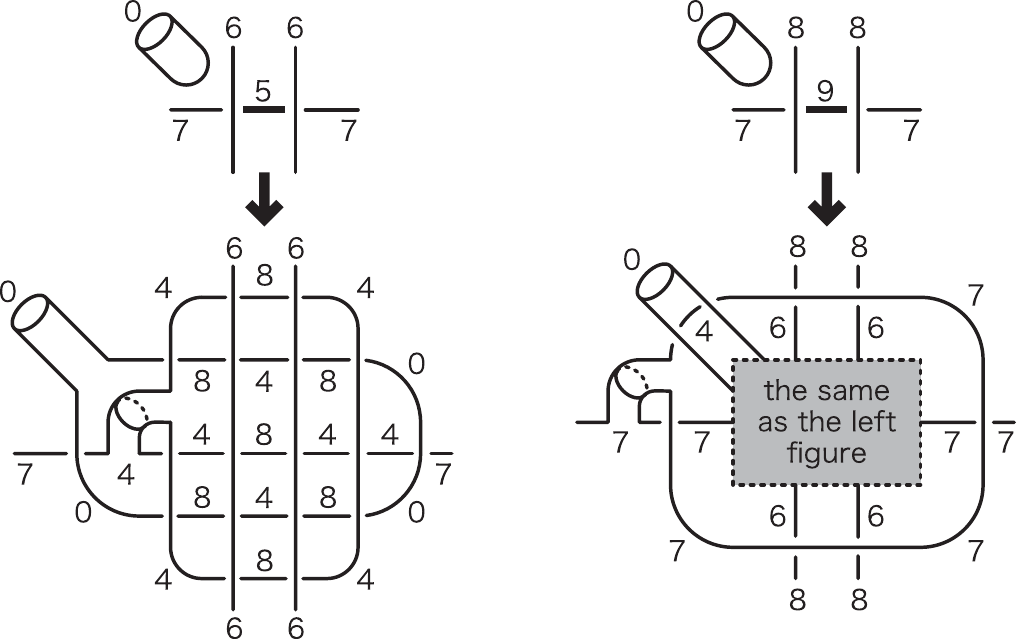}
\caption{}
\label{fig7-04}
\end{center}
\end{figure}
%%%%%%%%%%%%%%

For the case $(a,b)=(6,8)$, 
we pull a $7$-sheet and surround the shaded $10$-sheet 
by the $7$-sheet as shown in the left of Figure~\ref{fig7-05} 
so that the color $10$ is removed. 
For the case $(a,b)=(6,4)$, 
we pull a $7$-sheet toward the shaded $2$-sheet 
and deform the horizontal $6$-sheet to surround the $2$-sheet. 
Then this case reduces to the case $(a,b)=(6,8)$. 
\end{proof}

%%%%%%%%%%%%%%
\begin{figure}[htb]
\begin{center}
\includegraphics[bb=0 0 238 175]{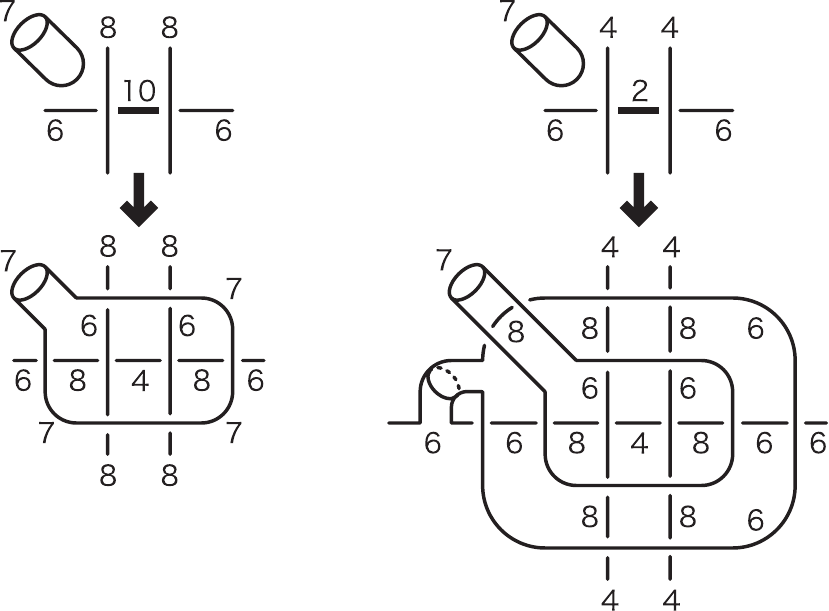}
\caption{}
\label{fig7-05}
\end{center}
\end{figure}
%%%%%%%%%%%%%%

For a $p$-colorable $2$-knot $K$, 
we denote by ${\rm C}_p(K)$ the minimum number  of $\#{\rm Im}(C)$ 
for all non-trivially $p$-colored diagrams $(D,C)$ of $K$ 
\cite{Sat2}. 
Then the following is an immediate consequence of 
Theorem~\ref{thm72}.

%%%%%%%%%%%%%%
\begin{corollary}\label{cor73} 
Any $11$-colorable ribbon $2$-knot $K$ satisfies 
${\rm C}_{11}(K)=5$. 
 \hfill$\Box$
\end{corollary}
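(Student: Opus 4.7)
The plan is to establish the two inequalities $\mathrm{C}_{11}(K)\leq 5$ and $\mathrm{C}_{11}(K)\geq 5$ separately, and then combine them.

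For the upper bound, I would simply invoke Theorem~\ref{thm72}(i) (or equivalently (ii)): it produces an $11$-colored diagram $(D_1,C_1)$ of $K$ with $\mathrm{Im}(C_1)=\{1,4,6,7,8\}$, and since $\#\mathrm{Im}(C_1)=5$, we immediately get $\mathrm{C}_{11}(K)\leq 5$.

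For the lower bound, the strategy is to repeat the classical argument (Theorems~\ref{thm24} and \ref{thm29} together with Lemma~\ref{lem28}) in the $2$-dimensional setting. Concretely, given any non-trivially $11$-colored diagram $(D,C)$ of $K$ with $\#\mathrm{Im}(C)>1$, I would verify that the palette graph $G(\mathrm{Im}(C))$ is connected: since $K$ is a single (connected) $2$-sphere, any two sheets of $D$ are joined by a path on the underlying surface that crosses finitely many double point circles, and each non-trivial crossing along this path contributes an edge in $G(\mathrm{Im}(C))$ linking the colors of the two sheets involved. This is the exact analog of Lemma~\ref{lem28}, with ``arcs'' replaced by ``sheets'' and ``under-crossings'' replaced by ``double point circles.'' Once connectivity is established, Theorem~\ref{thm24} gives $\#\mathrm{Im}(C)\geq\lfloor\log_2 11\rfloor+2=5$.

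The only step that is not purely formal is checking that the ``walk along the knot'' argument of Lemma~\ref{lem28} carries over verbatim to ribbon $2$-knot diagrams, and this is the place I would write down carefully; however, connectivity of the underlying $2$-sphere together with the fact that the Fox coloring condition at each double point circle is of the same form $a+b\equiv 2c\pmod{11}$ makes this straightforward rather than a genuine obstacle. Combining $\mathrm{C}_{11}(K)\leq 5$ from Theorem~\ref{thm72} with $\mathrm{C}_{11}(K)\geq 5$ from the palette-graph argument yields $\mathrm{C}_{11}(K)=5$, completing the proof.
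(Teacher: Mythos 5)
Your proposal is correct and follows the same route the paper intends: the upper bound $\mathrm{C}_{11}(K)\leq 5$ comes directly from Theorem~\ref{thm72}, and the lower bound is the palette-graph argument (connectivity of $G(\mathrm{Im}(C))$ plus Theorem~\ref{thm24}). The paper states the corollary as an immediate consequence of Theorem~\ref{thm72} and leaves the lower bound implicit (it is inherited from the cited earlier work); your explicit verification that the Lemma~\ref{lem28} connectivity argument carries over to sheets of a $2$-knot diagram is exactly the missing routine step, carried out correctly.
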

%%%%%%%%%%%%%%

The proof of the following is 
as same as that of Corollaries~\ref{cor32} and \ref{cor63}.

%%%%%%%%%%%%%%
\begin{corollary}\label{cor74} 
For any $11$-colorable ribbon $2$-knot $K$ and $a\not\equiv b\in{\Z}/11{\Z}$, 
there are $11$-colored diagrams $(D_1,C_1)$ and $(D_2,C_2)$ of $K$ with 
$$
\begin{array}{l}
{\rm Im}(C_1)=\{a,b,3a+9b,10a+2b,6a+6b\}, \ and\\
{\rm Im}(C_2)=\{a,b,5a+7b,2a+10b,10a+2b\}.
\end{array}
$$
\hfill$\Box$
\end{corollary}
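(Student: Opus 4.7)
The plan is to imitate the arguments used for Corollaries~\ref{cor32} and \ref{cor63}, replacing the input (the classical case) with Theorem~\ref{thm72} (the ribbon $2$-knot case). The essential ingredient that makes these arguments work in both settings is the following analogue of Lemma~\ref{lem210}: if $(D,C)$ is a non-trivially $11$-colored diagram of a ribbon $2$-knot $K$ and $f(x)=\alpha x+\beta$ with $\alpha\not\equiv 0\pmod{11}$, then $C'=f\circ C$ is again a non-trivial $11$-coloring of $D$ (since $f$ preserves the relation $a+b\equiv 2c$). This is immediate from the definition of a $p$-coloring of a $2$-knot diagram.

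First I would prove the assertion about $(D_1,C_1)$. Apply Theorem~\ref{thm72}(i) to obtain a diagram $(D_1,C_0)$ of $K$ with ${\rm Im}(C_0)=\{1,4,6,7,8\}$. Define
\[
f_1(x)=4(b-a)(x-1)+a,
\]
which is an affine map with nonzero leading coefficient since $a\not\equiv b\pmod{11}$. A direct check (modulo $11$) gives
\[
f_1(1)=a,\ f_1(4)=b,\ f_1(6)=3a+9b,\ f_1(7)=10a+2b,\ f_1(8)=6a+6b.
\]
Setting $C_1=f_1\circ C_0$ produces the desired diagram by the ribbon-$2$-knot version of Lemma~\ref{lem210}.

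For $(D_2,C_2)$, I proceed identically using Theorem~\ref{thm72}(ii) to obtain a diagram with image $\{0,4,6,7,8\}$, and then apply
\[
f_2(x)=3(b-a)x+a,
\]
whose values on $0,4,6,7,8$ are exactly $a,b,5a+7b,2a+10b,10a+2b$ respectively.

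There is no real obstacle here, since Theorem~\ref{thm72} has already done all the geometric work; the only thing to confirm is that the specified affine maps land on the prescribed sets, which is a short modular arithmetic verification (done above). The statement then follows at once.
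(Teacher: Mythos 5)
Your proposal is correct and is exactly the paper's intended argument: the paper simply states that the proof is the same as that of Corollaries~\ref{cor32} and \ref{cor63}, i.e.\ compose the colorings from Theorem~\ref{thm72} with the affine maps $f_1(x)=4(b-a)(x-1)+a$ and $f_2(x)=3(b-a)x+a$, using the (immediate) ribbon $2$-knot analogue of Lemma~\ref{lem210}. Your modular arithmetic checks out.
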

%%%%%%%%%%%%%%

%%%%%%%%%%%%%%%%%%%%%%%%%%
%%%   Reference
%%%%%%%%%%%%%%%%%%%%%%%%%%

\end{document}